\documentclass[12pt]{amsart}
\usepackage[active]{srcltx}
\usepackage{amsmath,amssymb}
\usepackage{latexsym}
\usepackage{color}
\usepackage{dsfont}
\usepackage[normalem]{ulem}
\usepackage{epsfig}
\usepackage{pgf,pgfarrows,pgfnodes,pgfautomata,pgfheaps}
\usepackage{verbatim}
\usepackage{enumerate}
\usepackage{url}
\setlength{\topmargin}{0in}
\setlength{\textwidth}{6.5in}
\setlength{\textheight}{8.4in}
\setlength{\oddsidemargin}{+0.1cm}
\setlength{\evensidemargin}{+0.1cm}
\newtheorem{teo}{Theorem}[section]
\newtheorem{Theorem}{Theorem}[section]
\newtheorem{lema}[teo]{Lemma}
\newtheorem{prop}[teo]{Proposition}
\newtheorem{defi}[teo]{Definition}
\newtheorem{cor}[teo]{Corollary}
\newtheorem{rem}[teo]{Remark}



\usepackage{epstopdf}

\begin{document}

\title[Quenching phenomena with extra terms]{\bf A non-local quenching system with coupled and uncoupled singular absorption terms.}

\author[Sergio Junquera]{Sergio Junquera$^1$}
    \address{Sergio Junquera
    \hfill\break\indent  Departamento de An\'alisis Matem\'atico y Matem\'{a}tica Aplicada,
    \hfill\break\indent Universidad Complutense de Madrid, 28040 Madrid, Spain.
    \hfill\break\indent  e-mail: {\tt sejunque@ucm.es}}
    \thanks{$^1$ Corresponding author: {\tt sejunque@ucm.es}}

\maketitle

\begin{abstract}
In this paper we study a non-local diffusion system of two equations with both coupled and uncoupled singular absorption terms of the type $u^{-p}$. We prove that there exist necessary and sufficient conditions for the existence of both stationary and quenching solutions. We also characterize in terms of the exponents of the absorption terms when the quenching is simultaneous or non-simultaneous and obtain the quenching rates.

\bigskip
\noindent \textit{Keywords} \textemdash \, Non-local diffusion, Quenching, Stationary solutions, Simultaneous and non-simultaneous.
\end{abstract}

\begin{section}{Introduction}

In recent years there has been an uptrend on the study of non-local operators, which arise naturally from phenomena such as anomalous diffusion, crystal dislocation or even financial movements. This makes these operators very interesting from both a pure mathematical curiosity point of view and as concrete applications.

In this paper we consider the following system with equations with a non-local diffusion operator:
    \begin{equation} \label{1.1}
    \left\{
    \begin{array}{l}
        \displaystyle u_t(x,t)=\int_\Omega J(x-y)u(y,t)\,dy
        +\int_{\mathbb{R}^N\setminus\Omega} J(x-y)\,dy -u(x,t)-\lambda v^{-p} u^{-\alpha}(x,t), \\
        \displaystyle v_t(x,t)=\int_\Omega J(x-y)v(y,t)\,dy
        +\int_{\mathbb{R}^N\setminus\Omega} J(x-y)\,dy-v(x,t)-\mu u^{-q} v^{-\beta}(x,t),  \\
        u(x,0)=u_0(x)>0; \; v(x,0)=v_0(x)>0,
    \end{array}
    \right.
    \end{equation}
with $x\in\overline{\Omega}$ and $t\in [0,T)$. We consider that $T\in (0,\infty]$ is the maximal existence time of the solution, $\Omega \subset \mathbb{R}^N$ is an open bounded connected smooth domain, the initial data $u_0$ and $v_0$ are positive continuous functions in $\mathcal{C}(\overline{\Omega})$ and the parameters $\lambda,\mu, p, q, \alpha, \beta >0$. The kernel $J:\mathbb{R}^N\to
\mathbb{R}$ is a non-negative $C^1$ function, radially symmetric, decreasing and with $\int_{\mathbb{R}^N} J(s)\,ds=1$.

Notice that this formulation is equivalent to a more straightforward system considering the following extensions to $\mathbb{R}^N$:
\begin{equation*}
\hat{u}(x,t)=\left\{
    \begin{array}{ll}
        u(x,t), & x \in{\overline{\Omega}} \\
        1, & x \in \mathbb{R}^N \backslash \overline{\Omega}.
    \end{array}
    \right., \;\;
    \hat{v}(x,t)=\left\{
    \begin{array}{ll}
        v(x,t), & x \in{\overline{\Omega}} \\
        1, & x \in \mathbb{R}^N \backslash \overline{\Omega}.
    \end{array}
    \right.
\end{equation*}

Then these new functions satisfy the following equations:
\begin{equation} \label{1.2}
    \left\{
    \begin{array}{ll}
        \displaystyle \hat{u}_t(x,t)= J \ast \hat{u} (x,t) - \hat{u} (x,t)-\lambda \hat{v} ^{-p}(x,t), &x\in{\overline{\Omega}}, \, t\in[0,T) \\
        \displaystyle \hat{v}_t(x,t)= J \ast \hat{v} (x,t) - \hat{v}(x,t)-\mu \hat{u}^{-q}(x,t), &x\in{\overline{\Omega}}, \, t\in[0,T)  \\
        \hat{u}(x,t) = \hat{v}(x,t) = 1, & x \in \mathbb{R}^N \backslash \overline{\Omega}, \, t\in[0,T) \\
        \hat{u}(x,0)=u_0(x); \; \hat{v}(x,0)=v_0(x), &x\in{\overline{\Omega}},
    \end{array}
    \right.
\end{equation}

Throughout the paper we will use these two formulations indistinctly, and with some abuse of notation we will write the $\hat{u}$ as $u$ directly, always considering that we are extending the solution $u(\cdot,t) \in \mathcal{C} (\overline{\Omega})$ by $1$ in $\mathbb{R}^N \backslash \overline{\Omega}$. This is a natural trick when working with non-local operators, which force us to consider boundary conditions (in this case, of Dirichlet type) in the whole complement of $\overline{\Omega}$ instead of just at $\partial\Omega$, see \cite{AMRT,BFRW,BV,Fi}. It is important to note that the extended functions are not continuous at $\partial\Omega$ in general, see \cite{Ch, ChChR}.

\begin{defi}
We say that $(u,v) \in \mathcal{C}^\infty ([0,T), \mathcal{C}(\overline{\Omega}) \times \mathcal{C}(\overline{\Omega}))$ is a classical solution of  system \eqref{1.1} if it satisfies the equations in \eqref{1.1} pointwise for every $(x,t)\in\overline\Omega \times [0,T)$.
\end{defi}

The existence and uniqueness of solutions of system \eqref{1.1} has already been studied extensively in previous papers, see \cite{AFJ}, making use of the fact that the reaction terms are locally lipschitz. Moreover, we can also define super and subsolutions of the system as follows.

\begin{defi}  \label{defsupersolucion}
    We say that, given $\tau_0 \in (t_0,t_1)$ and $\tau \in (\tau_0,t_1]$, a function $(\overline{u},\overline{v}) \in \mathcal{C}^1([\tau_0,\tau), \mathcal{C}(\overline{\Omega}) \times \mathcal{C}(\overline\Omega))$ is a supersolution of \eqref{1.1} in $[\tau_0,\tau)$ if $\overline{u}(x,t) >0, \;\; \overline{v}(x,t)>0$ for every $(x,t)\in\overline\Omega \times [\tau_0,\tau)$, and it satisfies
    \begin{equation*}
    \left\{
        \begin{array}{lr}
             \displaystyle \overline{u}_t(x,t) \geq \int_\Omega J(x-y) \overline{u} (y,t) dy + \int_{\mathbb{R}^N\backslash\Omega} J(x-y)dy - \overline{u} (x,t) -\lambda \overline{v}^{-p}\overline{u}^{-\alpha}(x,t), \\
             \displaystyle \overline{v}_t(x,t) \geq \int_\Omega J(x-y) \overline{v} (y,t) dy + \int_{\mathbb{R}^N\backslash\Omega} J(x-y)dy - \overline{v} (x,t) -\mu \overline{u}^{-q}\overline{v}^{-\beta}(x,t), \\
             \overline{u}(x,\tau_0) \geq u_0 (x), \;\; \overline{v}(x,\tau_0) \geq v_0 (x)
        \end{array}
    \right.
    \end{equation*}
    for every $(x,t)\in\overline\Omega \times [\tau_0,\tau)$. Conversely, $(\underline{u},\underline{v}) \in \mathcal{C}^1([\tau_0,\tau), \mathcal{C}(\overline{\Omega}) \times \mathcal{C}(\overline\Omega))$ is a subsolution of \eqref{1.1} in $[\tau_0,\tau)$ if $\underline{u}(x,t)>0, \;\; \underline{v}(x,t)>0$ and they fulfill the reverse inequalities for every $(x,t)\in\overline\Omega \times [\tau_0,\tau)$.
\end{defi}

As proven in \cite{AFJ}, system \eqref{1.1} admits a comparison principle with this definition. This is a tool we will need to use extensively.

Notice that both equations of our system have a singular absorption term. This causes a very interesting phenomenon to appear: there could be some point of $\overline\Omega$ in which one of the components of the solution vanishes. If this happens at a finite time $T$, the corresponding absorption term blows up and the classical solution no longer exists. This is known as quenching, which is going to be the main subject of study of this paper.

We say that a solution of system \eqref{1.1} given by $(u,v)$ presents quenching in finite time $T$ if
$$
\liminf_{t\nearrow T} \min \left\{\min_{\overline\Omega} u(\cdot,t),\min_{\overline\Omega}
v(\cdot,t) \right\} = 0.
$$
We will often refer to a solution that presents quenching as a \textit{quenching solution} or just say that the solution quenches.

The phenomenon of quenching, as with the non-local diffusion, has multiple modeling applications and appears naturally in physical models such as the nonlinear heat conduction in solid hydrogen, see \cite{R}, or the Arrhenius Law in combustion theory, see \cite{CK}.

According to the existence and uniqueness theorem proved in \cite{AFJ}, we can be a little bit more precise in this case. We know that, for system \eqref{1.1}, a solution $(u,v)$ is either global in time or it suffers quenching in finite time $T$. In this case,
\begin{equation}\label{eq-m}
\lim_{t\nearrow T} \min \left\{\min_{\overline\Omega} u(\cdot,t),\min_{\overline\Omega}v(\cdot,t) \right\} = 0,
\end{equation}
that is, the limit inferior is actually a proper limit.

We have three main results in this article. The first of them is related to the existence of both stationary and quenching solutions for system \eqref{1.1}. This is stated in the following theorem.

\begin{Theorem} \label{teo.estacionarias}
There exists an open neighbourhood $U$ of $(0,0)$ in $\mathbb{R}^2$ such that for $(\lambda,\mu)\in ((0,\infty) \times (0,\infty)) \cap \overline{U}$ there exist both global and quenching solutions, whereas for $(\lambda,\mu) \in ((0,\infty) \times (0,\infty)) \cap (\mathbb{R}^2 \backslash \overline{U})$ all solutions present quenching. Moreover, the following are satisfied:

\begin{enumerate} [i)]
    \item if $(\lambda_0,\mu_0) \in ((0,\infty) \times (0,\infty)) \cap \overline{U}$, then $(0,\lambda_0] \times (0,\mu_0] \subset \overline{U}$;
    \item $((0,\infty) \times (0,\infty)) \cap \overline{U} \subset (0,1) \times (0,1)$;
    \item if $(w_1,z_1)$ is a stationary solution of \eqref{1.1} with parameters $(\lambda_1,\mu_1)$, $(w_2,z_2)$ is a stationary solution with parameters $(\lambda_2,\mu_2)$ and $\lambda_1 \leq \lambda_2$, $\mu_1 \leq \mu_2$; then $w_1(x) \geq w_2(x)$ and $z_1(x) \geq z_2(x)$ for every $x\in\overline\Omega$.
\end{enumerate}
\end{Theorem}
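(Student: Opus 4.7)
My plan is to set
\[
    \mathcal{A} = \{(\lambda, \mu) \in (0, \infty)^2 : \text{system \eqref{1.1} admits a stationary solution}\}
\]
and $U = \mathrm{int}(\mathcal{A})$. The three items together with the existence/quenching dichotomy then reduce to an analysis of $\mathcal{A}$ using comparison arguments for the stationary problem and dynamical arguments for the evolution.

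I would first prove (ii). For any stationary solution $(w, z)$ extended by $1$ outside $\overline\Omega$, a maximum-principle argument gives $w, z \le 1$ on $\overline\Omega$: if $w(x_1) > 1$ at an interior maximum, then $J \ast w(x_1) \le w(x_1)$ would contradict $J \ast w(x_1) - w(x_1) = \lambda z^{-p}(x_1) w^{-\alpha}(x_1) > 0$. Evaluating the stationary equation at a point $x_0$ where $w$ attains its minimum, $J \ast w(x_0) \le 1$ and $w(x_0) \in (0, 1)$ (the constant $w \equiv 1$ is ruled out because it would force $\lambda = 0$), whence
\[
    \lambda = w^{\alpha}(x_0)\, z^p(x_0)\bigl(J \ast w(x_0) - w(x_0)\bigr) < 1,
\]
and symmetrically $\mu < 1$.

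For the downward-closure claim (i) and existence near the origin, note that $(1, 1)$ is a stationary supersolution of \eqref{1.1} (residual $-\lambda < 0$), and that any stationary solution $(w_0, z_0)$ at parameters $(\lambda_0, \mu_0)$ is automatically a stationary subsolution at smaller parameters $(\lambda_1, \mu_1) \le (\lambda_0, \mu_0)$, with residual $(\lambda_0 - \lambda_1)\, z_0^{-p} w_0^{-\alpha} \ge 0$ in the first equation and analogously in the second. The evolution of \eqref{1.1} at $(\lambda_1, \mu_1)$ starting from $(1, 1)$ satisfies $u_t(\cdot, 0), v_t(\cdot, 0) < 0$, and the comparison principle of \cite{AFJ} applied to a time-translated solution promotes this to monotone decrease in $t$. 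By comparison with $(w_0, z_0)$ from below, this evolution is global and decreases monotonically to a stationary solution at $(\lambda_1, \mu_1)$ dominating $(w_0, z_0)$. To seed existence near the origin, observe that at $(\lambda, \mu) = (0, 0)$ the unique solution is $(1, 1)$ and the linearization of the stationary map is the decoupled operator $(\varphi, \psi) \mapsto (J \ast \hat\varphi - \varphi,\, J \ast \hat\psi - \psi)$ on $C(\overline\Omega) \times C(\overline\Omega)$ (with $\hat\varphi$ the extension of $\varphi$ by $0$); this is invertible by a Fredholm argument combined with a maximum-principle injectivity check, so the implicit function theorem yields a branch of stationary solutions for $(\lambda, \mu)$ in a neighbourhood of the origin. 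Item (iii) then follows by iterating the downward-closure construction, provided uniqueness of the stationary solution at each parameter is established, which I would carry out via a coupled comparison argument at the joint minimum of $w - \tilde w$ and $z - \tilde z$ between two presumed stationary solutions at the same parameters.

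The main obstacle, I expect, is the non-existence claim outside $\overline U$: that whenever $(\lambda, \mu) \notin \overline U$, every solution of \eqref{1.1} quenches. I would argue by contradiction. Fix any $M \ge 1$ and consider the solution $(u, v)$ starting from the constant datum $(M, M)$. A direct computation shows $u_t(x, 0), v_t(x, 0) \le 0$, and the same time-translation/comparison argument promotes this to monotone decrease in $t$ throughout the maximal existence interval. If $(u, v)$ were global, its bounded monotone limit $(w_\infty, z_\infty)$ would, upon passing to the limit in the equation, either produce a stationary solution at $(\lambda, \mu)$ (contradicting $(\lambda, \mu) \notin \mathcal{A}$) in case $w_\infty, z_\infty$ remain uniformly positive, or else vanish at some $x_*$; in the latter case the pointwise ODE at $x_*$ is dominated by $u_t \le -\tfrac{1}{2}\lambda M^{-p} u^{-\alpha}$ once $u$ is small enough (using $v \le M$ and boundedness of $J \ast u - u$), an ODE that reaches zero in finite time, again contradicting globality. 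Hence the solution from $(M, M)$ quenches; by comparison any solution with initial datum $\le (M, M)$ also quenches, and letting $M \to \infty$ covers arbitrary positive initial data.
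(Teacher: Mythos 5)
Your architecture is essentially the paper's: implicit function theorem at $(\lambda,\mu)=(0,0)$ to seed existence, downward closure by viewing a stationary solution at larger parameters as a subsolution at smaller ones and flowing down monotonically from a constant supersolution, the bound $\lambda,\mu<1$ from the stationary equation at extrema, and non-existence outside the stationary region from the dichotomy ``monotone global flow either converges to a steady state or quenches.'' Your maximum-principle derivation of item (ii) is a clean shortcut relative to the paper's dynamic argument. But there are genuine gaps. The most serious one appears twice, in the downward-closure step and in the non-existence step: you assert that the bounded monotone limit of the flow ``is a stationary solution.'' The pointwise limit of a decreasing family of continuous functions is only upper semicontinuous, and since the convolution operator has no smoothing effect, continuity of the limit (which is required for it to be a classical stationary solution in $\mathcal{C}(\overline\Omega)$) is genuinely nontrivial. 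The paper spends most of the proof of Lemma \ref{lema.1quencheatodo} on exactly this, via a stability/determinant condition on the linearization along the flow; without such an argument your contradiction in the non-existence part (``limit uniformly positive $\Rightarrow$ a stationary solution exists'') does not close.

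Two further points. With $U=\mathrm{int}(\mathcal{A})$, the theorem demands global solutions for all of $\overline U\cap(0,\infty)^2$, i.e., that $\mathcal{A}$ contains its relative boundary; you never show that stationary solutions persist at the extremal parameters (the paper does this by passing to the limit $\mu\to\mu^*_\lambda$ in the monotone family of stationary solutions and using the strict lower bounds $w>\mu^{1/q}$, $z>\lambda^{1/p}$ from Lemma \ref{lema.globalesmenorque1} to keep the limit positive). And your route to item (iii) through uniqueness of the stationary solution is riskier than necessary: uniqueness is neither proved in the paper nor clearly true for this class of singular absorption problems, and a comparison at the joint minimum of $w-\tilde w$ and $z-\tilde z$ does not obviously exclude ordered pairs of distinct solutions. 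The paper instead obtains the ordering by comparing an arbitrary stationary solution at the larger parameters with the flow from $(1,1)$ at the smaller ones. Finally, the clause ``there exist both global and quenching solutions'' on $\overline U$ also needs the observation of Lemma \ref{condicionquenching} that sufficiently small data always quench, which your proposal omits.
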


Notice that we state some properties for the set $U$, but cannot describe its form or the behaviour of the stationary solutions inside it fully.

The second main result states sufficient conditions for the existence of simultaneous and non-simultaneous quenching. We do not know a priori if both componentes of the solution of \eqref{1.1} suffer quenching at the same time or if there can be conditions under which only one of them suffers quenching while the other remains bounded away from zero. To describe these phenomena, we will make use of the following rigorous definitions of the quenching sets.

\begin{defi}
    Let $(u,v)$ be a quenching solution of \eqref{1.1}. We define its quenching set as
    \begin{equation*}
        Q((u,v)) = \left\{x\in\overline\Omega : \exists \, t_n \rightarrow T^-, x_n \rightarrow x \text{ such that }  \min\{u(x_n,t_n), v(x_n,t_n) \} \rightarrow 0 \right\}.
    \end{equation*}
    Additionally, we can define the quenching sets associated to one of the components:
    \begin{equation*}
        Q(u) = \left\{x\in\overline\Omega : \exists \, t_n \rightarrow T^-, x_n \rightarrow x \text{ such that } u(x_n,t_n) \rightarrow 0 \right\}.
    \end{equation*}
    \begin{equation*}
        Q(v) = \left\{x\in\overline\Omega : \exists \, t_n \rightarrow T^-, x_n \rightarrow x \text{ such that } v(x_n,t_n) \rightarrow 0 \right\}.
    \end{equation*}
\end{defi}

With these sets well defined, we can state our second main theorem on the existence of simultaneous or non-simultaneous quenching.

\begin{Theorem}\label{teo.simultaneo}
Let $(u,v)$ be a quenching solution of \eqref{1.1}. Then,

i) if $p- \beta,q- \alpha\geq 1$, quenching is always simultaneous and $Q(u)=Q(v)$;

ii) if $q-\alpha\ge 1>p-\beta$, there exists $\delta>0$ such that $u(x,t) \geq \delta$ for every $(x,t)\in\overline\Omega \times [0,T)$;

iii) if $p-\beta\ge 1>q - \alpha$, there exists $\delta>0$ such that $v(x,t) \geq \delta$ for every $(x,t)\in\overline\Omega \times [0,T)$;

iv) for $p-\beta,q-\alpha<1$ there can be both simultaneous and non-simultaneous quenching.
\end{Theorem}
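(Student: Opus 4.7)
My plan is to reduce each case to a differential inequality analysis at a putative quenching point, exploiting the bound $u,v\le 1$ (which follows from the comparison principle using the extension by $1$ outside $\Omega$) and the preliminary estimate $\int_0^T u^{-q}(x,s)\,ds,\ \int_0^T v^{-p}(x,s)\,ds<\infty$ (obtained by integrating the equations, e.g.\ $v_t\le C-\mu u^{-q} v^{-\beta}\le C-\mu u^{-q}$ since $v\le 1$, then using $v(\cdot,T)\ge 0$).

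The heart of the proof is the following \emph{Key Lemma}: if $q-\alpha\ge 1$ and $x_0\in Q(u)$, then $x_0\in Q(v)$. I argue by contradiction. Suppose $v\ge c>0$ in a neighbourhood $U$ of $x_0$ for $t\in[t^*,T)$. Plugging $v\ge c$ into the $u$-equation gives $u_t\ge -C-\lambda c^{-p}u^{-\alpha}$, hence $(u^{1+\alpha})_t\ge -C'$; integrating along a quenching sequence $(x_n,t_n)\to(x_0,T)$ with $u(x_n,t_n)\to 0$ yields $u(x_0,t)\le C(T-t)^{1/(1+\alpha)}$. Since $q/(1+\alpha)\ge 1$, this forces $\int_{t^*}^T u^{-q}(x_0,s)\,ds=+\infty$. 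But the $v$-equation together with $v\le 1$ gives $v_t\le C-\mu u^{-q}$, and integrating yields $v(x_0,T^-)=-\infty$, contradicting $v\ge c$. Part (i) follows immediately by combining the Key Lemma with its symmetric counterpart.

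For parts (ii) and (iii) (symmetric; I treat (ii)) the Key Lemma already gives $Q(u)\subseteq Q(v)$, so to derive a contradiction I assume $x_0\in Q(u)$ and have both $u(x_0,t),v(x_0,t)\to 0$. Integrating the inequalities $(u^{1+\alpha})_t\ge -C'-(1+\alpha)\lambda v^{-p}$ and $(v^{1+\beta})_t\ge -C'-(1+\beta)\mu u^{-q}$ over $[t,T]$ gives
\[
u^{1+\alpha}(x_0,t)\le C(T-t)+(1+\alpha)\lambda\!\int_t^T v^{-p}(x_0,s)\,ds,
\]
and its analogue for $v$. Starting from the rough estimate $v(x_0,s)\ge c(T-s)^{1/(1+\beta)}$ (from the $v$-equation with $u\le 1$) and iterating, I obtain $u(x_0,t)\le C_n(T-t)^{a_n}$ with the linear recurrence $a_{n+1}=\frac{1+\beta-p}{(1+\alpha)(1+\beta)}+\frac{pq}{(1+\alpha)(1+\beta)}a_n$, whose fixed point is $a^*=\frac{1+\beta-p}{(1+\alpha)(1+\beta)-pq}$ when $pq<(1+\alpha)(1+\beta)$ (otherwise $a_n\to+\infty$). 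A direct computation shows $qa^*\ge 1\iff q\ge 1+\alpha$, our hypothesis; hence for some $n$ one has $qa_n\ge 1$, making $\int^T u^{-q}(x_0,\cdot)$ diverge and contradicting the preliminary bound. Therefore $Q(u)=\emptyset$, and compactness of $\overline\Omega$ promotes this to a uniform lower bound $u\ge\delta>0$.

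For (iv), simultaneous quenching is produced in the symmetric regime $\alpha=\beta$, $p=q$, $\lambda=\mu$, $u_0=v_0$, where uniqueness forces $u\equiv v$ so both components quench together. Non-simultaneous quenching is obtained by choosing $v_0$ uniformly close to $1$ and $u_0$ with small minimum: using $v\le 1$ in the $u$-equation gives $u\ge c(T-t)^{1/(1+\alpha)}$, and then $q-\alpha<1$ implies $\int_0^T u^{-q}(x,s)\,ds\le K$ uniformly in $x$; a standard continuity/barrier bootstrap on the inequality $v_t\ge -C-\mu 2^\beta u^{-q}$ (valid while $v\ge 1/2$) then keeps $v\ge 1/2$ throughout, so only $u$ quenches. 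The most delicate step is the bootstrap in (ii)--(iii), especially the boundary case $q=1+\alpha$ where $qa^*=1$ exactly and no finite iteration achieves $qa_n\ge 1$; this degenerate regime will require a continuous-time refinement of the bootstrap (or a logarithmic correction in the integral inequalities) and careful control of the prefactors $C_n$ along the iteration.
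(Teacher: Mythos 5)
Your route for parts (i)--(iii) is genuinely different from the paper's and is largely, but not completely, successful. The paper proves one functional inequality (Proposition \ref{prop.des.clave}), $\left|\mu\Psi_{q-\alpha}[u]-\lambda\Psi_{p-\beta}[v]\right|\le D$ with $\Psi_a[g]=g^{1-a}/(1-a)$ for $a\neq 1$ and $\Psi_1[g]=\log|g|$, and reads off (i)--(iii) from which primitives are unbounded. Your preliminary estimate $\int_0^T u^{-q}(x,\cdot)\,ds,\ \int_0^T v^{-p}(x,\cdot)\,ds<\infty$ is exactly the first half of that computation (with $M=\max\{1,\|u_0\|_\infty\}$, $N=\max\{1,\|v_0\|_\infty\}$ in place of $1$: you cannot assume $u,v\le1$ for arbitrary data, but the estimate survives), and your Key Lemma and part (i) are correct. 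The genuine gap in (ii)--(iii) is the boundary case $q-\alpha=1$ (resp.\ $p-\beta=1$), which the statement includes: there $qa^*=1$, your iteration never reaches $qa_n\ge1$, and no contradiction is produced. This is not a deferrable technicality; it is exactly where replacing the $\Psi$-inequality by power-rate bootstrapping loses information, since $\Psi_1[u]=\log u$ is still unbounded and the paper's argument covers the equality case at no extra cost. A smaller slip: the starting estimate $v(x_0,s)\ge c(T-s)^{1/(1+\beta)}$ does not follow ``from $u\le1$'' (that makes the absorption $\mu u^{-q}v^{-\beta}$ larger, pushing $v$ down); it follows from $u(x_0,t)\to0$ forcing $v^\beta v_t\le-1$ near $T$, combined with $v\ge0$.

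Part (iv) has two more serious gaps. First, your simultaneous example needs $\alpha=\beta$, $p=q$, $\lambda=\mu$, $u_0=v_0$, whereas the claim is for arbitrary parameters with $p-\beta,q-\alpha<1$; the paper's shooting argument (initial data $(\delta u_0,(1-\delta)v_0)$, the sets $A^{\pm}$ of $\delta$ giving non-simultaneous quenching shown open and nonempty, hence $A=(0,1)\setminus(A^+\cup A^-)$ nonempty by connectedness) exists precisely because no symmetric shortcut is available in general. Second, your non-simultaneous construction rests on $u\ge c(T-t)^{1/(1+\alpha)}$ ``using $v\le1$'': this requires $u^\alpha u_t\le -c'<0$, i.e.\ the absorption $\lambda v^{-p}$ dominating the diffusion term $\le M^{1+\alpha}$, which with $v_0$ near $1$ forces $\lambda>M^{1+\alpha}$ and fails for small $\lambda$. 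The paper instead takes \emph{both} $u_0$ and $v_0$ small enough that the hypothesis of Lemma \ref{condicionquenching} holds at every $x$, so that both $u^\alpha u_t\le-1$ and $v^\beta v_t\le-1$ and hence both lower bounds hold, and then sends $\delta\to1$ so that $T_\delta\to0$ and $u_\delta$ has no time to vanish. You would need to rebuild part (iv) along these lines.
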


We observe that the main parameters that determine if simultaneous or non-simultaneous quenching happens are $p-\beta$ and $q-\alpha$, which are the difference of the exponents of each of the components of the solution in both equations. Whenever the exponents of the coupled components in the singular absorption terms, which are $p$ and $q$, are both sufficiently bigger than the exponents of the uncoupled terms, $\alpha$ and $\beta$, both absorption terms dominate the behaviour of the solution and both components vanish in finite time. When one of these exponents $p,q$ becomes small enough, that absorption term becomes weaker and the corresponding component of the solution can stay bounded away from zero.

Once we know the conditions under which simultaneous and non-simultaneous quenching happen, we are interested in knowing the profile with which the quenching components of the solutions vanish. This is known as the quenching rate of the solutions. To simplify the notation, we say that
\begin{equation*}
    f(t) \sim g(t) \iff \; \exists \, C_1,C_2>0, t_0 \in [0,T) : \, C_1 f(t) \leq g(t) \leq C_2 f(t) \text{ for } t\in [t_0,T),
\end{equation*}
and define
\begin{equation*}
    \min_{x\in\overline\Omega} u(x,t) = u(x_u(t),t), \;\;\;\; \min_{x\in\overline\Omega} v(x,t) = v(x_v(t),t).
\end{equation*}
We state our results in the last two main theorems.

\begin{Theorem}\label{teo.tasas.mosimultena}
Let $(u,v)$ be a  quenching solution of \eqref{1.1}.

i) Assume that  $v(x,t) \geq \delta > 0$ for every $(x,t)\in\overline\Omega \times [0,T)$. Then $q-\alpha<1$ and
$$
\min_{x\in\overline\Omega} u(x,t)\sim (T-t)^{\frac{1}{1+\alpha}}.
$$

ii) Assume that  $u(x,t)\geq \delta >0$ for every $(x,t)\in\overline\Omega \times [0,T)$. Then $p-\beta<1$ and
$$
\min_{x\in\overline\Omega} v(x,t)\sim (T-t)^{\frac{1}{1+\beta}}.
$$
\end{Theorem}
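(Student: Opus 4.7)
The plan is to derive matching upper and lower bounds on $m(t):=\min_{\overline\Omega}u(\cdot,t)=u(x_u(t),t)$ from ordinary differential inequalities that $m$ satisfies along the solution, and then integrate. Two preliminary observations: first, because $v\geq\delta>0$ while $(u,v)$ quenches, only $u$ can quench; this rules out cases (i) and (ii) of Theorem \ref{teo.simultaneo} (which would force $v$ to quench as well, or $u$ to stay bounded below), so one of (iii), (iv) must hold and in both $q-\alpha<1$. Second, a standard non-local maximum principle argument—evaluating the equations at points where $u$ or $v$ attains its spatial maximum—shows that whenever the maximum exceeds $1$ it must be decreasing, hence $u(x,t)\leq M:=\max\{1,\max_{\overline\Omega}u_0\}$ and $v(x,t)\leq M':=\max\{1,\max_{\overline\Omega}v_0\}$ throughout $\overline\Omega\times[0,T)$.

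Since $u\in\mathcal{C}^1([0,T),\mathcal{C}(\overline\Omega))$, $m$ is Lipschitz in $t$ with $m'(t)=u_t(x_u(t),t)$ a.e., the standard envelope identity. Evaluating the first equation of \eqref{1.1} at $x_u(t)$ and using $u(y,t)\geq m(t)$ on $\Omega$ together with $\int_\Omega J+\int_{\mathbb{R}^N\setminus\Omega}J=1$ gives
\begin{equation*}
m'(t)\geq \bigl(1-m(t)\bigr)\int_{\mathbb{R}^N\setminus\Omega}J(x_u(t)-y)\,dy-\lambda\,v(x_u(t),t)^{-p}\,m(t)^{-\alpha}.
\end{equation*}
For $t$ close to $T$ we have $m(t)\leq 1$, so the non-local term is non-negative and, together with $v\geq\delta$, produces $m'(t)\geq -\lambda\delta^{-p}m(t)^{-\alpha}$. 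Writing this as $(m^{1+\alpha})'\geq -(1+\alpha)\lambda\delta^{-p}$, integrating on $[t,T)$ and using $m(T^-)=0$ yields the upper bound $m(t)\leq C_2(T-t)^{1/(1+\alpha)}$.

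For the matching lower bound, at $x_u(t)$ I discard the non-positive contribution $-u(x_u(t),t)$, use $u\leq M$ and $v\leq M'$, and the $L^1$ normalization of $J$ to obtain
\begin{equation*}
m'(t)\leq M\int_\Omega J(x_u(t)-y)\,dy+\int_{\mathbb{R}^N\setminus\Omega}J(x_u(t)-y)\,dy-\lambda (M')^{-p}m(t)^{-\alpha}\leq M+1-\lambda(M')^{-p}m(t)^{-\alpha}.
\end{equation*}
Since $m(t)\to 0^+$, the absorption term dominates for $t$ sufficiently close to $T$, giving $m'(t)\leq -\tfrac12\lambda(M')^{-p}m(t)^{-\alpha}$. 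The analogous ODE integration yields $m(t)\geq C_1(T-t)^{1/(1+\alpha)}$, and combining both bounds proves (i). Part (ii) then follows immediately by swapping the roles of $(u,p,\alpha)$ and $(v,q,\beta)$ throughout, since \eqref{1.1} is symmetric under this exchange.

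The only delicate point I anticipate is the envelope identity $m'(t)=u_t(x_u(t),t)$ and its behaviour at times where the minimum is not uniquely attained; however, only the one-sided differential inequalities above are actually required, and these continue to hold under standard Lipschitz-envelope arguments. Beyond that, the proof is a routine comparison with the explicit ODE $y'=-Cy^{-\alpha}$.
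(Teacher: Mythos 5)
Your proposal is correct and follows essentially the same route as the paper: the paper's one-line observation that at $(x_u(t),t)$ the diffusion stays bounded while the absorption behaves like $-\lambda v^{-p}u^{-\alpha}\sim -u^{-\alpha}$ (using $\delta\le v\le N$), followed by integration of $u_t(x_u(t),t)\sim -u^{-\alpha}(x_u(t),t)$, is exactly the two-sided ODE comparison you spell out, and your deduction of $q-\alpha<1$ from Theorem \ref{teo.simultaneo} is equivalent to the paper's appeal to Corollary \ref{lema.auxiliar.nosim}, both resting on Proposition \ref{prop.des.clave}. Your write-up simply makes explicit the constants and the envelope identity that the paper leaves implicit.
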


\begin{Theorem} \label{lema.qrate.sim}
Let $(u,v)$ be a quenching solution of \eqref{1.1}. Then

$i)$ for $p-\beta,q-\alpha>1$,
$$
u(x_u (t),t)\sim (T-t)^{\frac{p-1-\beta}{pq-(1+\alpha)(1+\beta)}}, \qquad v(x_u (t),t)\sim (T-t)^{\frac{q-1-\alpha}{pq-(1+\alpha)(1+\beta)}}
$$
and
$$
u(x_v (t),t)\sim (T-t)^{\frac{p-1-\beta}{pq-(1+\alpha)(1+\beta)}}, \qquad v(x_v (t),t)\sim (T-t)^{\frac{q-1-\alpha}{pq-(1+\alpha)(1+\beta)}}.
$$

$ii)$ for $p-\beta>1=q-\alpha$,
$$
u(x_u (t),t)\sim (T-t)^{\frac{1}{1+\alpha}} |\log (T-t)|^{\frac{-p}{(1-p-\beta)(1+\alpha)}}, \qquad v(x_u (t),t)\sim |\log (T-t)|^{\frac{1}{1-p+\beta}}
$$
and
$$
u(x_v (t),t)\sim (T-t)^{\frac{1}{1+\alpha}} |\log (T-t)|^{\frac{-p}{(1-p-\beta)(1+\alpha)}}, \qquad v(x_v (t),t)\sim |\log (T-t)|^{\frac{1}{1-p+\beta}}.
$$

$iii)$ for $p-\beta=1<q-\alpha$,
$$
u(x_u (t),t)\sim |\log (T-t)|^{\frac{1}{1-q+\alpha}}, \qquad v(x_u (t),t)\sim (T-t)^{\frac{1}{1+\beta}} |\log (T-t)|^{\frac{-q}{(1-q-\alpha)(1+\beta)}}.
$$
and
$$
u(x_v (t),t)\sim |\log (T-t)|^{\frac{1}{1-q+\alpha}}, \qquad v(x_v (t),t)\sim (T-t)^{\frac{1}{1+\beta}} |\log (T-t)|^{\frac{-q}{(1-q-\alpha)(1+\beta)}}.
$$

$iv)$ for $p-\beta=1=q-\alpha$,
$$
u(x_u (t),t)\sim (T-t)^{\frac{\lambda}{\lambda+\mu}}, \qquad v(x_u (t),t)\sim (T-t)^{\frac{\mu}{\lambda+\mu}}
$$
and
$$
u(x_v (t),t)\sim (T-t)^{\frac{\lambda}{\lambda+\mu}}, \qquad v(x_v (t),t)\sim (T-t)^{\frac{\mu}{\lambda+\mu}}.
$$

$v)$ for $p-\beta,q-\alpha< 1$, if  $(u,v)$ is a solution of \eqref{1.1} that presents quenching at time $T<+\infty$ such that there exists some $t_0\in[0,T)$ for which $x_u(t) = x_v(t) = x(t)$ for every $t\in[t_0,T)$ and the quenching is simultaneous, then
$$
u(x(t),t)\sim (T-t)^{\frac{p-1-\beta}{pq-(1+\alpha)(1+\beta)}}, \qquad
v(x(t),t)\sim (T-t)^{\frac{q-1-\alpha}{pq-(1+\alpha)(1+\beta)}}.
$$
\end{Theorem}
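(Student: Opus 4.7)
The plan is to reduce system \eqref{1.1} to coupled ordinary differential inequalities for the minima $m(t):=\min_{\overline\Omega} u(\cdot,t)$ and $n(t):=\min_{\overline\Omega} v(\cdot,t)$, and to analyze the resulting ODE system in each regime. First I would observe that at a minimizer $x_u(t)$ of $u(\cdot,t)$, the non-local diffusion $J\ast u-u$ is non-negative and uniformly bounded above by some constant $C_1$ (using that $u,v$ stay bounded, which follows from the comparison principle with a constant supersolution). Combined with $u^{-\alpha}(x_u(t),t)=m(t)^{-\alpha}$, $v^{-p}(x_u(t),t)\le n(t)^{-p}$, and the envelope identity $m'(t)=u_t(x_u(t),t)$ a.e., this yields
$$
-\lambda v^{-p}(x_u(t),t)\, m(t)^{-\alpha}\;\le\; m'(t)\;\le\; C_1-\lambda v^{-p}(x_u(t),t)\, m(t)^{-\alpha},
$$
with the analogous inequality for $n'(t)$ at $x_v(t)$.

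For case (i), Theorem \ref{teo.simultaneo} gives simultaneous quenching with $Q(u)=Q(v)$. Substituting the power ansatz $m(t)\sim(T-t)^a$, $n(t)\sim(T-t)^b$ into the balance $m'\sim-\lambda n^{-p}m^{-\alpha}$, $n'\sim-\mu m^{-q}n^{-\beta}$ forces the linear system
$$
(1+\alpha)a+pb=1,\qquad qa+(1+\beta)b=1,
$$
whose unique solution (well-defined since $pq-(1+\alpha)(1+\beta)>0$ in this regime) produces the stated exponents. Lower bounds $m(t)\gtrsim(T-t)^a$, $n(t)\gtrsim(T-t)^b$ follow by ODE comparison with explicit power subsolutions using $v^{-p}(x_u,t)\le n^{-p}$. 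Case (v) bypasses any concentration issue by the hypothesis $x_u(t)=x_v(t)=x(t)$, so the ODE inequalities hold at $x(t)$ with $v(x,t)=n(t)$ and $u(x,t)=m(t)$ (modulo the $C_1$ diffusion term), and the same scaling closes the argument in both directions.

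For the degenerate cases (ii)--(iv) one has $pq=(1+\alpha)(1+\beta)$ and the linear system above is singular. I would pass to the variables $U=u^{1+\alpha}$, $V=v^{1+\beta}$, which transform the ODE balance into
$$
U_t V\sim-(1+\alpha)\lambda\, v^{1+\beta-p},\qquad V_t U\sim-(1+\beta)\mu\, u^{1+\alpha-q}.
$$
In case (iv), with $p=1+\beta$ and $q=1+\alpha$, both right-hand sides are constants, so $(UV)_t$ is constant and $UV\sim(T-t)$; the ratio $U_t/V_t$ provides a power relation between $U$ and $V$ which, combined with $UV\sim(T-t)$, pins down the rates involving $\lambda,\mu$. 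In case (ii) with $p-\beta>1=q-\alpha$, only the second equation has a constant right-hand side, while the first retains a factor $v^{-(p-\beta-1)}$ that drives a logarithmic correction; substituting an ansatz $v\sim|\log(T-t)|^\gamma$, $u\sim(T-t)^{1/(1+\alpha)}|\log(T-t)|^\delta$ into both ODEs pins down $\gamma,\delta$ as stated. Case (iii) is symmetric.

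The principal obstacle is the matching upper bound on the quenching rate in case (i): the lower bound requires only $v^{-p}(x_u,t)\le n(t)^{-p}$, which is automatic, but for the upper bound one needs $v^{-p}(x_u,t)\gtrsim n(t)^{-p}$, i.e.\ that $v$ at the minimum of $u$ decays at the same rate as $\min v$. Without this, only the weaker uncoupled rate $m\lesssim(T-t)^{1/(1+\alpha)}$ would follow. Overcoming this requires a quantitative strengthening of $Q(u)=Q(v)$: using $C^1$-in-$x$ regularity coming from the non-local diffusion together with Lipschitz interpolation, I would aim to show that $v(x_u(t),t)/n(t)$ stays bounded above as $t\to T$, transferring the rate of $n(t)$ to $v(x_u(t),t)$. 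This concentration estimate is the technical heart of part (i); the other parts either invoke the common-point hypothesis of (v) or benefit from the simpler structure of the degenerate ODE system in (ii)--(iv).
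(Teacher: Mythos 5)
You have correctly isolated the crux of the argument --- one must relate $v(x_u(t),t)$ to the decay of $u(x_u(t),t)$, not merely bound it below by $\min v$ --- but your proposed resolution does not work, and this is a genuine gap. Non-local diffusion of the form $J\ast u-u$ is not smoothing: solutions of \eqref{1.1} have no spatial regularity beyond that of the initial data, so there is no ``$C^1$-in-$x$ regularity coming from the non-local diffusion'' to interpolate with, and even a uniform Lipschitz bound in $x$ would not transfer the rate of $n(t)$ to $v(x_u(t),t)$ when $x_u(t)$ and $x_v(t)$ are far apart. The paper closes this gap by an entirely different device, Proposition \ref{prop.des.clave}: integrating the combination $\mu u^{\alpha-q}u_t-\lambda v^{\beta-p}v_t$ in time and using the second equation to control $\int_0^t u^{-q}$ yields the \emph{pointwise-in-$x$} bound $|\mu\Psi_{q-\alpha}[u](x,t)-\lambda\Psi_{p-\beta}[v](x,t)|\le D$, which evaluated at $x=x_u(t)$ gives exactly the relation $u^{1-q+\alpha}(x_u(t),t)\sim v^{1-p+\beta}(x_u(t),t)$ (Corollary \ref{lema-estimaciones}) that your scaling argument needs. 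Without this or an equivalent substitute, your case (i) only delivers the one-sided bounds you acknowledge.

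Two further points. First, your manipulations in cases (ii)--(iv) differentiate products such as $UV$ along a single trajectory, but the envelope identity $\partial_t\big(u(x_u(t),t)\big)=u_t(x_u(t),t)$ holds only for the component whose minimizer $x_u(t)$ is; $\partial_t\big(v(x_u(t),t)\big)$ is not $v_t(x_u(t),t)$, so the two ODE balances cannot be combined at a common moving point outside case (v) --- again it is the integrated inequality of Proposition \ref{prop.des.clave} that supplies the needed pointwise relation. Second, in cases (ii)--(iii) substituting a log-corrected ansatz only verifies consistency; the paper's actual derivation goes through an implicit integral relation, a L'H\^opital step to show $\log u(x_u(t),t)\sim\log(T-t)$, and an asymptotic for the upper incomplete Gamma function to extract the exponent of the logarithm. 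Your outline for case (v) and the lower bounds in case (i) are in line with the paper.
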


The paper is organized as follows. In Section 2 we give some preliminary results on the behaviour of the solutions of \eqref{1.1} and prove the existence of both stationary solutions and quenching solutions. Section 3 is devoted to the study of how those stationary solutions behave and to prove necessary and sufficient conditions for them to exist, proving Theorem \ref{teo.estacionarias}. In Section 4, we give conditions under which both types of quenching, simultaneous and non-simultaneous, can appear to prove Theorem \ref{teo.simultaneo}. Finally, in Section 5, we study the quenching rates of the solutions of \eqref{1.1} and prove Theorems \ref{teo.tasas.mosimultena} and \ref{lema.qrate.sim}.

\end{section}

\begin{section}{Some preliminary results}

    In this section we will prove some preliminary results that will be useful in the sequel. First, we state the comparison result for \eqref{1.1} as seen in \cite{AFJ}.

\begin{lema} \label{lemacomparacion}
    Let $(\overline{u},\overline{v})$ be a supersolution of \eqref{1.1} in $[\tau_0,\overline{\tau})$ and $(\underline{u},\underline{v})$ a subsolution of \eqref{1.1} in $[\tau_0,\underline{\tau})$, and take $\tau = \min\{\overline\tau, \underline\tau\}$. Then $\overline{u} (x,t) \geq \underline{u} (x,t)$ and $\overline{v} (x,t) \geq \underline{v} (x,t)$ for every $(x,t)\in\overline\Omega \times [\tau_0,\tau)$.
\end{lema}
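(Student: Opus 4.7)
The plan is to restrict to compact subintervals of time where all four functions admit uniform positive bounds, making the singular reaction Lipschitz and cooperative (quasi-monotone nondecreasing), and then carry out a perturbation argument.

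First, fix $\tau' \in (\tau_0, \tau)$ arbitrarily. By continuity and strict positivity of $\overline u, \overline v, \underline u, \underline v$ on the compact set $\overline\Omega \times [\tau_0, \tau']$, there exist $0 < c \le C$ such that all four functions take values in $[c, C]$. On this set, the reactions $f(u,v) = -\lambda v^{-p} u^{-\alpha}$ and $g(u,v) = -\mu u^{-q} v^{-\beta}$ are $C^1$, Lipschitz with some constant $L$, and crucially all their partial derivatives are nonnegative (for instance, $\partial_u f = \alpha \lambda v^{-p} u^{-\alpha-1} \ge 0$), which encodes cooperativity. Setting $W = \overline u - \underline u$ and $Z = \overline v - \underline v$, subtracting the super- and sub-solution inequalities (the exterior-boundary integrals cancel) and applying the mean value theorem yields
$$W_t(x,t) \ge \int_\Omega J(x-y) W(y,t)\,dy - W(x,t) + A(x,t) W(x,t) + B(x,t) Z(x,t),$$
with coefficients $0 \le A, B \le L$, and an analogous inequality for $Z_t$.

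For the perturbation step, fix $K > 2L + 1$ and, for each $\varepsilon > 0$, define $W_\varepsilon = W + \varepsilon e^{Kt}$ and $Z_\varepsilon = Z + \varepsilon e^{Kt}$, which are strictly positive at $t = \tau_0$. Suppose, for contradiction, that one of them becomes non-positive somewhere in $\overline\Omega \times [\tau_0, \tau']$. By continuity and compactness, there is a first time $t^* \in (\tau_0, \tau']$ and a point $x^* \in \overline\Omega$ at which, WLOG, $W_\varepsilon(x^*, t^*) = 0$ while $W_\varepsilon, Z_\varepsilon \ge 0$ throughout $\overline\Omega \times [\tau_0, t^*]$. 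Then $t \mapsto W_\varepsilon(x^*, t)$ attains its minimum $0$ at $t = t^*$, so $\partial_t W_\varepsilon(x^*, t^*) \le 0$. On the other hand, using $W(x^*,t^*) = -\varepsilon e^{Kt^*}$ together with $W \ge -\varepsilon e^{Kt^*}$ everywhere and $\int_\Omega J \le 1$, the non-local part satisfies $\int_\Omega J(x^*-y) W(y,t^*)\,dy - W(x^*,t^*) \ge 0$; cooperativity ($A, B \ge 0$) combined with $W(x^*,t^*) = -\varepsilon e^{Kt^*}$ and $Z(x^*,t^*) \ge -\varepsilon e^{Kt^*}$ gives $A W + B Z \ge -2L \varepsilon e^{Kt^*}$ at $(x^*, t^*)$. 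Adding the perturbation term $\varepsilon K e^{Kt^*}$ yields $\partial_t W_\varepsilon(x^*, t^*) \ge (K - 2L) \varepsilon e^{Kt^*} > 0$, a contradiction. Hence $W_\varepsilon, Z_\varepsilon > 0$ on $[\tau_0, \tau']$; letting $\varepsilon \to 0^+$ gives $W, Z \ge 0$, and since $\tau' < \tau$ was arbitrary, the conclusion holds on all of $[\tau_0, \tau)$.

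The main obstacle is deploying cooperativity at the right place: without the sign information $A, B \ge 0$, the term $B Z(x^*, t^*)$ could be very negative when $Z$ takes large positive values elsewhere, breaking the lower bound on the reaction contribution. The singular nature of the nonlinearity, which might look daunting, is tamed straightforwardly by the strict positivity built into the definition of sub- and supersolutions, which provides uniform Lipschitz control once one localizes in time to $[\tau_0, \tau']$.
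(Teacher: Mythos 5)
Your proof is correct and self-contained: the localization to $[\tau_0,\tau']$ to obtain uniform bounds $c\le \overline u,\overline v,\underline u,\underline v\le C$ (hence Lipschitz control of the singular reaction), the mean-value linearization with nonnegative coefficients $A,B$ encoding cooperativity, and the $\varepsilon e^{Kt}$ perturbation with a first-touching-time argument are all deployed correctly, including the key estimate $\int_\Omega J(x^*-y)W(y,t^*)\,dy - W(x^*,t^*)\ge 0$ which uses $\int_\Omega J\le 1$ together with $W(\cdot,t^*)\ge -\varepsilon e^{Kt^*}$. The paper itself does not prove this lemma but cites \cite{AFJ} for it; your argument is the standard comparison-principle proof for cooperative non-local systems and matches what that reference establishes.
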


    We will use this result throughout the whole paper. Thanks to it, the following corollary gives us an upper bound of every solution of system \eqref{1.1}.

\begin{cor} \label{lema.MNsuper}
    Let $u_0$ and $v_0$  be two positive functions in $C(\overline{\Omega})$ and $(u,v) \in \mathcal{C}^1([0,T), \mathcal{C}(\overline{\Omega})\times \mathcal{C}(\overline{\Omega}))$ the solution of system \eqref{1.1} with initial data $(u_0,v_0)$. Define the quantities
    \begin{equation*}
        M=\max\{1,\|u_0\|_\infty\}, \;\;\; N=\max\{1,\|v_0\|_\infty\}.
    \end{equation*}
    Then $u(x,t) \leq M$ and $v(x,t) \leq N$ for every $(x,t)\in \overline\Omega \times[0,T)$, with $T>0$ the maximal existence time of the solution.
\end{cor}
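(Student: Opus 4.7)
The plan is to apply the comparison principle (Lemma \ref{lemacomparacion}) against the constant pair $(\overline{u},\overline{v})\equiv(M,N)$, which I claim is a supersolution of \eqref{1.1} on $[0,T)$ with initial data dominating $(u_0,v_0)$.

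Positivity and the initial inequality are immediate: $M,N\geq 1>0$, and by definition $M\geq\|u_0\|_\infty\geq u_0(x)$, $N\geq\|v_0\|_\infty\geq v_0(x)$ for every $x\in\overline\Omega$. The only substantive step is verifying the differential inequalities in Definition \ref{defsupersolucion}. Since $\overline{u}_t=\overline{v}_t=0$, it suffices to show that each right-hand side is non-positive. For the first equation, we compute, using $M\geq 1$ to replace the constant $1$ in the exterior integral by $M$,
\begin{equation*}
\int_\Omega J(x-y)M\,dy + \int_{\mathbb{R}^N\setminus\Omega} J(x-y)\,dy - M
\leq M\int_{\mathbb{R}^N} J(x-y)\,dy - M = 0,
\end{equation*}
since $J$ integrates to $1$ on $\mathbb{R}^N$. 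Subtracting the non-negative term $\lambda N^{-p}M^{-\alpha}$ keeps the whole expression $\leq 0$, as required. The inequality for the second equation follows identically after swapping roles and using $N\geq 1$ together with the non-negativity of $\mu M^{-q}N^{-\beta}$.

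With $(M,N)$ established as a supersolution and $(u,v)$ as a (sub)solution of \eqref{1.1} with $(u_0,v_0)\leq(M,N)$ pointwise, Lemma \ref{lemacomparacion} applied on any interval $[0,\tau)\subset[0,T)$ yields $u(x,t)\leq M$ and $v(x,t)\leq N$ for all $(x,t)\in\overline\Omega\times[0,T)$, which is the desired conclusion.

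The only potentially delicate point is the bookkeeping around the exterior integral: the nonlocal operator in \eqref{1.1} is written with the boundary data $1$ absorbed into $\int_{\mathbb{R}^N\setminus\Omega} J(x-y)\,dy$ rather than as $M\int_{\mathbb{R}^N\setminus\Omega} J(x-y)\,dy$, so one must explicitly use $M\geq 1$ to enlarge this term before invoking $\int_{\mathbb{R}^N}J=1$. This is precisely why the statement takes $M=\max\{1,\|u_0\|_\infty\}$ rather than simply $\|u_0\|_\infty$, and it is the only place where the condition $M,N\geq 1$ is genuinely used.
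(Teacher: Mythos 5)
Your proof is correct and follows essentially the same route as the paper: verify that the constant pair $(M,N)$ is a supersolution by using $M,N\geq 1$ to absorb the exterior integral and $\int_{\mathbb{R}^N}J=1$, then invoke the comparison principle of Lemma \ref{lemacomparacion}. Your remark on why the maximum with $1$ is needed is exactly the point the paper's computation relies on.
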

\begin{proof}
    We can prove that $(M,N)$ is a supersolution of system \eqref{1.1} with initial data $(u_0,v_0)$. On one hand, we see clearly that $M\geq u_0(x)$ and $N \geq v_0(x)$ for every $x\in\overline\Omega$. On the other hand, since $M, N\geq 1$, we see that
    \begin{equation*}
        M \int_{\Omega} J(x-y) dy + \int_{\mathbb{R}^N \backslash \Omega} J(x-y) dy - M - \lambda N^{-p} M^{-\alpha} \leq M - M - \lambda N^{-p} M^{-\alpha} \leq 0.
    \end{equation*}
    The other inequality is analogous. Therefore, thanks to Lemma \ref{lemacomparacion}, we know that
    \begin{equation*}
        u(x,t) \leq M, \;\;\; v(x,t) \leq N
    \end{equation*}
    for all $(x,t)\in\overline{\Omega}\times [0,\tau)$.
\end{proof}

The same comparison result also gives us some a priori estimates of the velocity of the solutions of \eqref{1.1} that we will use later.

\begin{cor} \label{lemamenorque0}
Let $(u,v)$ be a solution of \eqref{1.1} with initial data $(u_0,v_0)$. If the initial data satisfy, for every $x\in\overline\Omega$,
$$
\begin{array}{l}
\displaystyle\int_\Omega J(x-y)u_0(y)\,dy
+\int_{\mathbb{R}^N\setminus\Omega} J(x-y)\,dy -u_0(x)-\lambda v_0^{-p}(x)u_0^{-\alpha}(x)\le 0\\
\displaystyle\int_\Omega J(x-y)v_0(y)\,dy
+\int_{\mathbb{R}^N\setminus\Omega} J(x-y)\,dy -v_0(x)-\mu u_0^{-q}(x)v_0^{-\beta}(x)\le0
\end{array}
$$
then $u_t(x,t)\leq 0$ and $v_t(x,t) \leq 0$ for every $x\in\overline{\Omega}$ and every $t \in[0,T)$, with $T>0$ the maximal existence time of the solution.
\end{cor}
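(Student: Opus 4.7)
My plan is to differentiate the system in $t$ and apply a maximum-principle argument to the linear non-local system satisfied by the time derivatives $(u_t,v_t)$.

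Since by the classical-solution definition $(u,v)\in\mathcal{C}^\infty([0,T),\mathcal{C}(\overline\Omega)\times\mathcal{C}(\overline\Omega))$, I set $w:=u_t$ and $z:=v_t$ and differentiate each equation of \eqref{1.1} in $t$. The second, purely boundary integral $\int_{\mathbb{R}^N\setminus\Omega} J(x-y)\,dy$ is time-independent, so it drops out. This yields the linear non-local cooperative system
\begin{equation*}
\left\{
\begin{array}{l}
\displaystyle w_t = \int_\Omega J(x-y)\, w(y,t)\,dy - w + a\, w + b\, z, \\
\displaystyle z_t = \int_\Omega J(x-y)\, z(y,t)\,dy - z + c\, w + d\, z,
\end{array}
\right.
\end{equation*}
with coefficients $a=\alpha\lambda v^{-p}u^{-\alpha-1}$, $b=p\lambda v^{-p-1}u^{-\alpha}$, $c=q\mu u^{-q-1}v^{-\beta}$ and $d=\beta\mu u^{-q}v^{-\beta-1}$, all non-negative. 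The hypothesis says precisely that $w(x,0)\leq 0$ and $z(x,0)\leq 0$ for every $x\in\overline\Omega$.

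I then fix any $T'\in(0,T)$ and aim to show $w,z\leq 0$ on $\overline\Omega\times[0,T']$. On this compact cylinder $(u,v)$ is continuous and bounded away from $0$, so the coefficients $a,b,c,d$ are bounded; I choose $K>1+\sup_{\overline\Omega\times[0,T']}(a+b+c+d)$ and introduce the exponentially shifted pair $\tilde w:=e^{-Kt}w$, $\tilde z:=e^{-Kt}z$, which satisfy the same system with the replacement $a\mapsto a-K$, $d\mapsto d-K$. Suppose for contradiction that $\sigma:=\max_{\overline\Omega\times[0,T']}\max\{\tilde w,\tilde z\}>0$. By the initial data hypothesis this maximum is attained at some $(x_0,t_0)$ with $t_0>0$, and without loss of generality $\tilde w(x_0,t_0)=\sigma$. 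Then $\tilde w_t(x_0,t_0)\geq 0$, whereas using $\tilde w\leq\sigma$ pointwise, $\tilde z(x_0,t_0)\leq\sigma$, $\int_\Omega J(x_0-y)\,dy\leq 1$ and $b\geq 0$,
\begin{equation*}
\tilde w_t(x_0,t_0) \leq \sigma\int_\Omega J(x_0-y)\,dy + (a-1-K)\sigma + b\sigma \leq \sigma(a+b-K) <0,
\end{equation*}
which is the desired contradiction. Hence $\tilde w,\tilde z\leq 0$, so $w,z\leq 0$ on $[0,T']$, and since $T'\in(0,T)$ was arbitrary, $u_t,v_t\leq 0$ throughout $\overline\Omega\times[0,T)$.

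The main technical point is this maximum principle for the linearized non-local system. The cooperative structure (non-negative off-diagonal coefficients $b$ and $c$) is what prevents a positive supremum of one component from being offset by the other; the exponential shift by $K$ produces the strict inequality at the contradiction step. Passing through compact sub-intervals $[0,T']$ rather than working directly on $[0,T)$ is needed because $a,b,c,d$ blow up at the quenching time. One could alternatively try a time-shift comparison, taking $U(x,t)=u(x,t+h)$, $V(x,t)=v(x,t+h)$ and applying Lemma \ref{lemacomparacion}, but this would run into a circular requirement that $u(x,h)\leq u_0(x)$ for small $h$, which is exactly the conclusion sought.
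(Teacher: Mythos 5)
Your proof is correct and follows essentially the same route as the paper: differentiate \eqref{1.1} in time, observe that $(u_t,v_t)$ solves a linear cooperative non-local system with non-positive initial data, and conclude by comparison with the supersolution $(0,0)$. The only difference is that the paper simply invokes the comparison result of \cite{AFJ} for this linearized system, whereas you prove the required maximum principle directly (exponential shift plus evaluation at a positive maximum on compact time slabs $[0,T']$ where the coefficients stay bounded), which makes the argument self-contained.
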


\begin{proof}
Notice that, since $u$ and $v$ are $\mathcal{C}^2$ in the time variable, $(u_t,v_t)$ satisfies the following equations in $\overline\Omega \times [0,\tau)$:
\begin{align*}
    (u_t)_t (x,t) &= \int_\Omega J(x-y) u_t(y,t) dy - u_t(x,t) +\lambda p v^{-p-1} u^{-\alpha} v_t (x,t) + \lambda \alpha v^{-p} u^{-\alpha-1} u_t(x,t), \\
    (v_t)_t (x,t) &= \int_\Omega J(x-y) v_t(y,t) dy - v_t(x,t) +\mu q u^{-q-1} v^{-\beta} u_t (x,t) + \mu \beta u^{-q} v^{-\beta-1} v_t(x,t),
\end{align*}
and the initial data fulfill $u_t(x,0) \leq 0$, $v_t(x,0) \leq 0$ thanks to the hypotheses.

This system of equations also satisfies the comparison result from \cite{AFJ} and it is easy to see that $(0,0)$ is a supersolution of this system. Then we can apply it to get the desired result.
\end{proof}

The second half of this section will be devoted to two very important lemmas that will prove that there exist both quenching solutions and stationary solutions under certain circumstances. First, let us prove that there exists quenching in finite time as long as the initial data are small enough at one point.

\begin{lema} \label{condicionquenching}
    If $u_0,v_0\in \mathcal{C}(\overline{\Omega})$ are two positive functions such that at some point $x_0\in \overline{\Omega}$ they satisfy
    \begin{equation} \label{condquenching}
        u_0(x_0) \leq \left( \frac{\mu}{N^{1+\beta}+\varepsilon} \right)^{1/q}, \;\;\; v_0(x_0) \leq \left( \frac{\lambda}{M^{1+\alpha}+\varepsilon} \right)^{1/p},
    \end{equation}
    where $M=\max\{1,\|u_0\|_\infty\}$, $N=\max\{1,\|v_0\|_\infty\}$ and $\varepsilon>0$. Then the solution of system \eqref{1.1} with initial data $(u_0,v_0)$ quenches in finite time
    \begin{equation*}
        T_\varepsilon\le \min\left\{\frac{u_0^{1+\alpha}(x_0)}{\varepsilon(1+\alpha)},\frac{v_0^{1+\beta}(x_0)}{\varepsilon(1+\beta)}\right\}.
    \end{equation*}
    Furthermore, $u^\alpha u_t(x_0,t) < -\varepsilon$ and $v^\beta v_t(x_0,t) < -\varepsilon$ for every $t\in[0,T_\varepsilon)$.
\end{lema}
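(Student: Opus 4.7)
The plan is to use the global upper bound from Corollary \ref{lema.MNsuper} in tandem with the smallness hypothesis \eqref{condquenching} to show that the singular absorption terms at $x_0$ dominate every other contribution, producing explicit negative upper bounds for $u^\alpha u_t$ and $v^\beta v_t$ at that point.

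I would start by invoking Corollary \ref{lema.MNsuper} to get $u \leq M$ and $v \leq N$ on $\overline{\Omega}\times[0,T)$. Since $M\geq 1$, at $x_0$ the nonlocal diffusion term is bounded by
\begin{equation*}
\int_\Omega J(x_0-y)u(y,t)\,dy + \int_{\mathbb{R}^N\setminus\Omega} J(x_0-y)\,dy \leq M\int_{\mathbb{R}^N} J(s)\,ds = M.
\end{equation*}
Substituting this into \eqref{1.1}, multiplying the $u$-equation by $u^\alpha(x_0,t)$, and using the elementary inequality $u^\alpha(M-u)\leq M^{1+\alpha}$ on $[0,M]$ produces
\begin{equation*}
u^\alpha u_t(x_0,t) \leq M^{1+\alpha} - \lambda v^{-p}(x_0,t),
\end{equation*}
together with the symmetric bound $v^\beta v_t(x_0,t) \leq N^{1+\beta} - \mu u^{-q}(x_0,t)$.

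The second step is the coupled bootstrap that feeds in \eqref{condquenching}. Rewriting the hypothesis as $\lambda v_0^{-p}(x_0)\geq M^{1+\alpha}+\varepsilon$ and $\mu u_0^{-q}(x_0)\geq N^{1+\beta}+\varepsilon$, I claim that $u(x_0,t)\leq u_0(x_0)$ and $v(x_0,t)\leq v_0(x_0)$ hold on all of $[0,T)$. On any subinterval where both bounds are valid, the estimates above give
\begin{equation*}
u^\alpha u_t(x_0,t) \leq M^{1+\alpha}-\lambda v_0^{-p}(x_0)\leq -\varepsilon, \qquad v^\beta v_t(x_0,t)\leq -\varepsilon,
\end{equation*}
forcing strictly decreasing behaviour of $u(x_0,\cdot)$ and $v(x_0,\cdot)$ there. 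If either bound first failed at some $t^*\in(0,T)$, this strict decrease on $[0,t^*)$ would give $u(x_0,t^*)<u_0(x_0)$ and $v(x_0,t^*)<v_0(x_0)$, contradicting the definition of $t^*$. Hence the two differential inequalities propagate throughout $[0,T)$.

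Finally, integrating them in time gives
\begin{equation*}
u^{1+\alpha}(x_0,t) \leq u_0^{1+\alpha}(x_0) - (1+\alpha)\varepsilon t, \qquad v^{1+\beta}(x_0,t) \leq v_0^{1+\beta}(x_0) - (1+\beta)\varepsilon t,
\end{equation*}
so at least one of $u(x_0,\cdot)$, $v(x_0,\cdot)$ must vanish by the minimum of the two times in the statement, forcing quenching no later than that instant. The main obstacle I anticipate is precisely the coupled bootstrap: the needed sign of $u^\alpha u_t$ at $x_0$ depends on $v(x_0,\cdot)$ not having grown above $v_0(x_0)$, and vice versa, so the monotonicity argument must be run on both components simultaneously. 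The strict inequality $u^\alpha u_t(x_0,t)<-\varepsilon$ promised in the statement, rather than merely $\leq$, should come either from the sharper bound $u^\alpha(M-u)<M^{1+\alpha}$ on $(0,M)$ or, for $t>0$, from the strict decrease of $v(x_0,\cdot)$ just established.
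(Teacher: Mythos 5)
Your proposal is correct and follows essentially the same route as the paper: bound the solution above by $(M,N)$ via Corollary \ref{lema.MNsuper}, use the smallness hypothesis \eqref{condquenching} to get $u^\alpha u_t(x_0,\cdot)<-\varepsilon$ and $v^\beta v_t(x_0,\cdot)<-\varepsilon$, propagate these by a continuity/first-failure argument on both components simultaneously, and integrate. Your bootstrap is phrased in terms of the first time $u(x_0,\cdot)\leq u_0(x_0)$ or $v(x_0,\cdot)\leq v_0(x_0)$ fails, whereas the paper tracks the first zero of $u_t(x_0,\cdot)$ or $v_t(x_0,\cdot)$, but these are equivalent formulations of the same coupled monotonicity argument.
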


\begin{proof}
First we note that thanks to Corollary \ref{lema.MNsuper},
    \begin{equation*}
        u(x,t) \leq M, \;\;\; v(x,t) \leq N
    \end{equation*}
for all $(x,t)\in\overline\Omega\times[0,T_\varepsilon)$, where $T_\varepsilon$ is the maximal definition time of the solution.  On the other hand, by \eqref{condquenching} and the fact that $u_0$ and $v_0$ are strictly positive functions,
    \begin{equation*}
        u^\alpha u_t(x_0,0) < (J\ast u_0(x_0)) u_0^\alpha(x_0) - \lambda v_0(x_0)^{-p} \leq M^{1+\alpha} - \lambda \left(\frac{\lambda}{M^{1+\alpha}+\varepsilon} \right)^{-1} =  -\varepsilon.
    \end{equation*}
    \begin{equation*}
        v^\beta v_t(x_0,0) < (J\ast v_0(x_0)) v_0^\beta(x_0) - \mu u_0(x_0)^{-q} \leq N^{1+\beta} - \mu \left(\frac{\mu}{N^{1+\beta}+\varepsilon} \right)^{-1} = -\varepsilon
    \end{equation*}

    Now suppose that there exists a time $t_1\in(0,T_\varepsilon)$ in which either $u_t(x_0,t)$ or $v_t(x_0,t)$ reach $0$ for the first time. Without loss of generality, $u_t(x_0,t_1)=0$, $u_t(x_0,t)<0$ for all $t \in [0, t_1)$, and $v_t(x_0,t) < 0 $ for all $t\in[0,t_1)$.
    In this situation, it is clear that $v(x_0,t)<v_0(x_0)$ and $u(x_0,t) < u_0(x_0)$ for $t\in[0,t_1)$. Then we conclude that
    \begin{equation*}
       0=u^\alpha u_t(x_0,t_1) < (J\ast u(x_0,t_1))u^\alpha(x_0,t_1) - \lambda v^{-p} (x_0,t_1) < M^{1+\alpha} - \lambda v_0(x_0)^{-p}  \leq -\varepsilon,
    \end{equation*}
    which brings us to a contradiction. Therefore, $u(x_0,\cdot)$ and $v(x_0,\cdot)$ are decreasing functions in $[0,T_\varepsilon)$ and applying the same argument as before,
    $$
    u^\alpha u_t(x_0,t) <-\varepsilon,\qquad v^\beta v_t(x_0,t) <-\varepsilon
    $$
    for every $t\in[0,T_\varepsilon)$. Integrating these inequalities between $0$ and $t\in[0,T_\varepsilon)$, we get
    $$
    u^{1+\alpha}(x_0,t)<u_0^{1+\alpha}(x_0)-\varepsilon (1+\alpha) t,\qquad
    v(x_0,t)<v_0^{1+\beta}(x_0)-\varepsilon(1+\beta) t.
    $$
    Therefore, the solution quenches at finite time
    $$
    T_\varepsilon\le \min\left\{\frac{u_0^{1+\alpha}(x_0)}{\varepsilon(1+\alpha)},\frac{v_0^{1+\beta}(x_0)}{\varepsilon(1+\beta)}\right\}.
    $$
\end{proof}

Observe that there exist quenching solutions for any values of $\lambda,\mu,p,q,\alpha,\beta>0$, since the only condition we need for their existence is for the initial data to be below some threshold that depends on said parameters. This will not be the case for the stationary solutions.

Notice that, if $\lambda=\mu=0$, the solution $(u,v) \equiv (1,1)$ is a stationary solution. Let us prove that if $\lambda$ and $\mu$ are small enough, there also exist stationary solutions.

\begin{lema} \label{lema.existenciaestacionarias}
    There is a neighbourhood of $(0,0)$, $U^\prime \subset \mathbb{R}^2$, such that there exist stationary solutions of system \eqref{1.1}  if $(\lambda,\mu) \in  U^\prime \cap ((0,\infty) \times (0,\infty))$.
\end{lema}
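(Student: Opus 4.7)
My plan is to produce a stationary solution as the large-time limit of the evolution of \eqref{1.1} starting from the natural supersolution $(u_0,v_0)\equiv (1,1)$, using the sub/supersolution machinery already developed in the excerpt.

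First, since $\int_{\mathbb{R}^N} J=1$, evaluating the right-hand side of \eqref{1.1} at $(1,1)$ gives $-\lambda\le 0$ and $-\mu\le 0$, so the hypotheses of Corollary \ref{lemamenorque0} are satisfied with initial data $(1,1)$. Hence the corresponding solution $(u,v)$ is non-increasing in $t$, and by Corollary \ref{lema.MNsuper} it satisfies $u,v\le 1$ throughout its existence interval.

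The key step is to construct a positive stationary subsolution $(\underline{u},\underline{v})$. I will seek it of the form $(\underline{u},\underline{v})=(w,w)$ with $w(x)=1-\delta\phi(x)$, where $\delta>0$ is small and $\phi>0$ is chosen so that the diffusive part is uniformly positive. Using $\int_\Omega J(x-y)\,dy+\int_{\mathbb{R}^N\setminus\Omega}J(x-y)\,dy=1$, a direct computation gives
\[
\int_\Omega J(x-y)w(y)\,dy+\int_{\mathbb{R}^N\setminus\Omega}J(x-y)\,dy - w(x)=\delta\bigl[\phi(x)-K\phi(x)\bigr],
\]
where $K\phi(x):=\int_\Omega J(x-y)\phi(y)\,dy$. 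I take $\phi$ to be the principal eigenfunction of the compact positive operator $K$ on $\mathcal{C}(\overline{\Omega})$: the Krein-Rutman theorem yields $K\phi=\mu_1\phi$ with $\phi>0$ uniformly on $\overline{\Omega}$, and a non-local maximum-principle argument (using that $\Omega$ is bounded and that $J$ is a probability kernel on $\mathbb{R}^N$) forces $\mu_1<1$. Hence $\phi-K\phi=(1-\mu_1)\phi\ge\eta$ for some $\eta>0$. Fixing $\delta\in(0,1)$ so that $w\in[1/2,1]$ (hence $w^{-(p+\alpha)}$ and $w^{-(q+\beta)}$ are bounded), and restricting $(\lambda,\mu)$ to a sufficiently small neighbourhood $U'$ of $(0,0)$, the stationary subsolution inequalities
\[
\delta\eta\ge\lambda\,w(x)^{-(p+\alpha)},\qquad \delta\eta\ge\mu\,w(x)^{-(q+\beta)}
\]
are satisfied at every $x\in\overline{\Omega}$, and $w\le 1=u_0=v_0$.

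Finally, by the comparison principle (Lemma \ref{lemacomparacion}), $u(x,t)\ge w(x)>0$ and $v(x,t)\ge w(x)>0$ for every $(x,t)$ in the existence interval; so the solution is global, monotonically non-increasing in $t$, and squeezed between $(w,w)$ and $(1,1)$. The pointwise monotone limits $u_\infty(x),v_\infty(x)$ therefore exist; since $\int_0^\infty|u_t(x,t)|\,dt=1-u_\infty(x)<\infty$ (similarly for $v$), there is a sequence $t_n\to\infty$ along which $u_t,v_t\to 0$ uniformly, and passing to the limit in the pointwise equation by dominated convergence shows that $(u_\infty,v_\infty)$ is a stationary solution of \eqref{1.1}. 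The only genuinely non-local step is the spectral input $\mu_1<1$ with a positive principal eigenfunction, which I expect to be the main technical obstacle; everything else reduces to the comparison and monotonicity tools already in place.
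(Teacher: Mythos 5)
Your route is genuinely different from the paper's: the paper proves Lemma \ref{lema.existenciaestacionarias} by linearizing around $(1,1)$ at $\lambda=\mu=0$ and applying the Implicit Function Theorem, after showing that $K_J-I$ is an isomorphism of $\mathcal{C}(\overline\Omega)$ (injectivity via a connectivity/chain argument, surjectivity via compactness of $K_J$ and the Fredholm alternative, plus the Open Mapping Theorem). You instead build a positive stationary subsolution $w=1-\delta\phi$ with $\phi$ a principal eigenfunction of $K_J$, trap the monotone solution issuing from $(1,1)$ between $(w,w)$ and $(1,1)$, and pass to the limit. The computation of the diffusive part, the verification of the stationary subsolution inequalities for $(\lambda,\mu)$ small, and the use of Corollaries \ref{lemamenorque0} and \ref{lema.MNsuper} together with Lemma \ref{lemacomparacion} are all correct; the spectral input you flag ($r(K_J)<1$ with a uniformly positive eigenfunction) is available, and the paper's own injectivity argument for $K_J-I$ contains exactly the chain argument needed to exclude the eigenvalue $1$ and to show the positive eigenfunction cannot vanish. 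What each approach buys: yours is more elementary (no Fredholm alternative or IFT) and immediately gives the ordering $w\le u_\infty\le 1$; the paper's gives the stationary pair as the value of a genuinely continuous branch in $(\lambda,\mu)$ near the origin.

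The one substantive gap is at the very end: a pointwise, monotone limit of continuous functions need not be continuous, and the paper's notion of solution requires $u_\infty,v_\infty\in\mathcal{C}(\overline\Omega)$. This is not a pedantic point --- the paper devotes the bulk of the proof of Lemma \ref{lema.1quencheatodo} (the stability/determinant argument) precisely to proving continuity of such a limit, because $a\mapsto a+\lambda b^{-p}a^{-\alpha}$ is not monotone for small $a$, so continuity of $u_\infty+\lambda v_\infty^{-p}u_\infty^{-\alpha}$ does not by itself transfer to $u_\infty$. In your restricted setting the gap is repairable: since $u_\infty,v_\infty\ge w\ge 1/2$ and $(\lambda,\mu)$ is small, the map $(a,b)\mapsto\bigl(a+\lambda b^{-p}a^{-\alpha},\,b+\mu a^{-q}b^{-\beta}\bigr)$ is a small Lipschitz perturbation of the identity on $[1/2,1]^2$, hence injective with continuous inverse, and continuity of $u_\infty,v_\infty$ follows from continuity of $\int_\Omega J(x-y)u_\infty(y)\,dy+\int_{\mathbb{R}^N\setminus\Omega}J(x-y)\,dy$ and its analogue for $v_\infty$; but this must be said. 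You should also drop the claim that $u_t,v_t\to0$ \emph{uniformly} along a sequence --- it is unjustified and unnecessary, since for each fixed $x$ the derivative $u_t(x,t)$ converges as $t\to\infty$ (dominated convergence applied to the right-hand side of \eqref{1.1}) and its limit must be $0$ because $u(x,\cdot)$ is bounded and monotone.
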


\begin{proof}
    This result follows from the Implicit Function Theorem. First we note that, if $\lambda = \mu = 0$, then $w_0\equiv z_0 \equiv 1$ is a stationary solution. We linearize around this solution, considering:
    \begin{equation*}
        \phi = 1 - u, \;\;\; \gamma = 1 - v.
    \end{equation*}
    Given small $\delta,\varepsilon >0$, let
    \begin{equation*}
    Y_0 = \{ (\phi,\gamma) \in \mathcal{C}(\overline{\Omega}) \times \mathcal{C}(\overline{\Omega}) : -\delta < \phi,\gamma < 1   \}
    \end{equation*}
    and  $\mathcal{F}: (-\varepsilon,\varepsilon) \times (-\varepsilon,\varepsilon) \times Y_0 \longrightarrow \mathcal{C}(\overline{\Omega}) \times \mathcal{C}(\overline{\Omega})$ be a non-linear operator given by
    \begin{align*}
        \mathcal{F}(\lambda,\mu,\phi,\gamma)(x) =& \left(\int_{\Omega} J(x-y)\phi(y) dy - \phi(x) + \lambda (1-\gamma(x))^{-p}(1-\phi(x))^{-\alpha}, \right.\\
        &\left. \int_{\Omega} J(x-y)\gamma(y) dy - \gamma(x) + \mu (1-\phi(x))^{-q} (1-\gamma(x))^{-\beta} \right).
    \end{align*}
    It is clear that $\mathcal{F}(0,0,0,0) = (0,0)$ and the differential operator with respect to the last two variables $D_{(\phi,\gamma)} \mathcal{F}$ evaluated at $\lambda=\mu=0$ is, for every $\xi,\chi\in \mathcal{C}(\overline\Omega)$ and $x\in\overline\Omega$,
    \begin{equation*}
        D_{(\phi,\gamma)} \mathcal{F} (0,0)(\xi,\chi)(x) = \begin{pmatrix}
                                        (K_J - I)(\xi)(x) & 0 \\
                                        0 & (K_J - I)(\chi)(x)
                                     \end{pmatrix},
    \end{equation*}
    where $K_J: \mathcal{C}(\overline\Omega) \longrightarrow \mathcal{C}(\overline\Omega)$ is defined as
    \begin{equation*}
        K_J (z)(x) = \int_\Omega J(x-y) z(y) dy.
    \end{equation*}

    $K_J - I$ is clearly continuous and linear. Let us prove that it is also an injective operator.

    Take $z\in\mathcal{C}(\overline\Omega)$ such that $K_J (z) = z$ and let us prove that $z\equiv 0$. Define $x_0$ as $z(x_0) = \max_{x\in\overline\Omega} z(x)$ and take some $\delta>0$ such that $B(0,\delta) \subset \text{supp}(J)$. Suppose that $z(x_0)>0$. Under these assumptions we know that
    \begin{equation*}
        z(x_0) = K_J(z)(x_0)= \int_\Omega J(x_0-y)z(y)dy \leq \int_\Omega J(x_0-y)z(x_0)dy \leq z(x_0).
    \end{equation*}
    Therefore,
    \begin{equation*}
        \int_\Omega J(x_0-y) (z(y)-z(x_0)) dy = 0.
    \end{equation*}
    Since $J(x_0-y) > 0$ for every $y \in B(x_0,\delta)$, we conclude that $z(y) = z (x_0)$ for every $y\in B(x_0,\delta)$. The domain $\overline\Omega$ is connected and compact, so for any $\overline{x}\in\overline\Omega$, there exists a chain of points $\{x_0,x_1,x_2, \dots, x_n=\overline{x}\}$ such that $x_i \in B(x_{i-1},\delta)$ for every $i=1,\dots,n$. We know that $z(x_1)=z(x_0)$ thanks to our previous argument and, since $\delta$ only depends on $J$, it is clear that $z(x_2)=z(x_1)$ because $x_2 \in B(x_1,\delta)$. Repeating this, we get that $z(x_0) = z(x_i)$ for every $i=1,\dots,n$ and, in particular, $z(\overline{x})=z(x_0)$. Therefore, $z(x)=z(x_0)$ for every $x\in\overline\Omega$. However, if $z$ is a constant function then $K_J(z)=z$ implies that $\int_\Omega J(x-y) dy = 1$ for every $x\in\overline\Omega$, which is a contradiction. Then we conclude that $z(x)\leq 0$ for every $x\in\overline\Omega$. Applying the same argument to the function $-z$, we obtain that $z(x)\geq 0$ for every $x\in\overline\Omega$, which implies that $z\equiv 0$. Therefore, $K_J - I$ is an injective operator.

    Additionally, $K_J$ is compact, see Section 2.1.3 of \cite{SG}, so using the Fredholm alternative we can further assert that $K_J - I$ is bijective and $D_{(\phi,\gamma)} \mathcal{F}(0,0)$ is too. Finally, using the Open Mapping Theorem, we deduce that $D_{(\phi,\gamma)}\mathcal{F}(0,0): \mathcal{C}(\overline{\Omega}) \times \mathcal{C}(\overline{\Omega}) \longrightarrow \mathcal{C}(\overline{\Omega}) \times \mathcal{C}(\overline{\Omega})$ is an isomorphism.

    Therefore, we can apply the Implicit Function Theorem to ensure that there is a neighbourhood $U^\prime \subset \mathbb{R}^2$ of $(0,0)$ such that, if $(\lambda,\mu) \in U^\prime$, then there is a $(\phi_{\lambda,\mu},\gamma_{\lambda,\mu})\in Y_0$ that satisfies $\mathcal{F}(\lambda,\mu,\phi_{\lambda,\mu},\gamma_{\lambda,\mu}) = (0,0)$, see \cite{CR}. Finally, if we define
    \begin{equation*}
    w_{\lambda,\mu}=\left\{
    \begin{array}{ll}
    1-\phi_{\lambda,\mu}, & x\in\overline\Omega,\\
    1, & x\in \mathbb{R}^N\backslash \overline\Omega,
    \end{array}\right.
    \qquad
    z_{\lambda,\mu}=\left\{
    \begin{array}{ll}
    1-\gamma_{\lambda,\mu}, & x\in\overline\Omega,\\
    1, & x\in \mathbb{R}^N\backslash \overline\Omega,
    \end{array}\right.
    \end{equation*}
    these are stationary solutions of our system with parameters $(\lambda,\mu) \in U^\prime \cap ((0,\infty) \times (0,\infty))$.
\end{proof}

Up until now, we have proven that there exist quenching solutions for any set of parameters and that there are some sufficient conditions over the parameters for the existence of stationary solutions. Next, we need to determine what necessary conditions, if any, we need to impose for system \eqref{1.1} to have stationary solutions.

\end{section}

\begin{section}{Proof of Theorem \ref{teo.estacionarias}}

In this section, we will prove Theorem \ref{teo.estacionarias}. To do so, first we need some lemmas that will help us understand how the solutions of the system behave.

This first result gives us an important property of the behaviour of globally defined solutions of system \eqref{1.1}.

\begin{lema} \label{lema.globalesmenorque1}
    Let $(u,v)$ be a globally defined solution of system \eqref{1.1}. Then there exists a time $t_1 \in [0,\infty)$ for which $u(x,t)\leq 1$ and $v(x,t)\leq 1$ for every $x\in\overline\Omega$ and $t\in[t_1,\infty)$. Furthermore, if $(w,z)$ is a stationary solution of \eqref{1.1}, then $\mu^{1/q} < w(x) \leq 1$ and $ \lambda^{1/p} < z(x) \leq 1$ for every $x\in\overline\Omega$.
\end{lema}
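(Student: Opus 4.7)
The statement splits into the asymptotic bound for every globally defined solution and the two-sided pointwise estimate for stationary solutions; I plan to treat them separately, both via the comparison principle of Lemma~\ref{lemacomparacion} combined with elementary maximum-principle computations.

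For the first part, the plan is to construct an explicit affine-in-time supersolution that drives both components below $1$ in finite time. The crucial input is Corollary~\ref{lema.MNsuper}, which gives $u \leq M$ and $v \leq N$ throughout the existence interval; combined with these bounds the singular reaction becomes uniformly coercive, $\lambda v^{-p} u^{-\alpha} \geq \lambda N^{-p} M^{-\alpha}$ and $\mu u^{-q} v^{-\beta} \geq \mu M^{-q} N^{-\beta}$. Writing $\eta(x) := \int_{\mathbb{R}^N \setminus \Omega} J(x-y)\,dy$ and setting $c := \min\{\lambda N^{-p} M^{-\alpha},\, \mu M^{-q} N^{-\beta}\}$, $\overline{u}(t) := M - ct$, $\overline{v}(t) := N - ct$, a direct computation reduces the right-hand side of the $u$-equation evaluated at $(\overline{u},\overline{v})$ to $\eta(x)(1-\overline{u}) - \lambda \overline{v}^{-p}\overline{u}^{-\alpha}$; as long as $\overline{u},\overline{v} \geq 1$ the first term is $\leq 0$ and the second is $\leq -\lambda N^{-p}M^{-\alpha} \leq -c = \overline{u}_t$, so the supersolution inequality holds, and symmetrically for the $v$-equation. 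Once one barrier reaches $1$ I will splice in the constant $1$ for that component (which remains a supersolution of its own equation since its right-hand side becomes $-\lambda v^{-p} \leq 0$, resp.\ $-\mu u^{-q} \leq 0$) while the other continues its linear decay, until both equal $1$ at a finite time $t_1$. For $t \geq t_1$ the constant pair $(1,1)$ is itself a supersolution of the full system (its right-hand sides are $-\lambda$ and $-\mu$), so a final application of comparison gives $u, v \leq 1$ on $[t_1, \infty)$.

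For the stationary statement, I will use nonlocal maximum-principle computations. Let $x^*$ realise $\max_{\overline\Omega} w$; evaluating the stationary $u$-equation at $x^*$ and estimating the kernel integral by $w(x^*)(1-\eta(x^*))$ gives
\[
\eta(x^*)\bigl(w(x^*) - 1\bigr) \;\leq\; -\lambda\,z^{-p}(x^*)\,w^{-\alpha}(x^*) \;<\; 0,
\]
which is impossible unless $\eta(x^*) > 0$ and $w(x^*) < 1$; hence $w < 1$ on $\overline\Omega$, and the analogous argument on the $v$-equation gives $z < 1$. For the lower bound I plug any $x \in \overline\Omega$ into the stationary $v$-equation and use $z \leq 1$ inside the nonlocal integral,
\[
z(x) + \mu\,w^{-q}(x)\,z^{-\beta}(x) \;=\; \int_\Omega J(x-y)\,z(y)\,dy + \eta(x) \;\leq\; (1-\eta(x)) + \eta(x) \;=\; 1,
\]
so $\mu w^{-q}(x) \leq (1-z(x))\,z^\beta(x) < 1$, the last inequality because both factors lie strictly in $(0,1)$ (using $0 < z < 1$). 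Rearranging yields $w(x) > \mu^{1/q}$, and $z(x) > \lambda^{1/p}$ follows from the symmetric computation on the $u$-equation. The only mild nuisance will be that the barrier in the first part is not $\mathcal{C}^1$ at its corners, so strictly speaking I will apply Lemma~\ref{lemacomparacion} separately on each sub-interval of linearity/constancy and chain the resulting pointwise estimates rather than exhibit one globally smooth supersolution.
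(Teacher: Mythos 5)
Your proof is correct, and for the main assertion it follows essentially the same route as the paper: an explicit time-decreasing barrier starting at $(M,N)$, driven down by the uniform coercivity $\lambda v^{-p}u^{-\alpha}\ge \lambda N^{-p}M^{-\alpha}$ coming from Corollary~\ref{lema.MNsuper}, spliced with the constant $1$ once a component reaches that level, and chained through Lemma~\ref{lemacomparacion} on the sub-intervals of smoothness (the paper organizes this as a four-case analysis on $\|u_0\|_\infty,\|v_0\|_\infty$ and lets the two components decay at different rates, but the mechanism is identical). The one place you genuinely deviate is the upper bound $w,z\le 1$ for stationary solutions: the paper simply observes that a stationary solution is a global solution and invokes the first part of the lemma, whereas you give a self-contained nonlocal maximum-principle computation at the point where $w$ attains its maximum, obtaining $\lambda z^{-p}w^{-\alpha}(x^*)\le \eta(x^*)(1-w(x^*))$ and hence the strict bound $w<1$. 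Your variant is slightly stronger (strict inequality, and it shows the maximum must lie where $\eta>0$) at the cost of one extra computation; the lower bound $w>\mu^{1/q}$, $z>\lambda^{1/p}$ is the same kernel estimate as in the paper, phrased directly rather than by contradiction.
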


\begin{proof}
    Consider $(u,v)$ a globally defined solution with initial data $(u_0,v_0)$. We will split the proof of the first part of the lemma in multiple cases.

    $i)$ If $\|u_0\|_\infty, \|v_0\|_\infty \leq 1$, then $(U,V)$ with $U \equiv V \equiv 1$ is a supersolution of \eqref{1.1} with these initial data. Using Lemma \ref{lemacomparacion}, we get that $1 \geq u(x,t)$ and $1\geq v(x,t)$ for every $(x,t)\in\overline\Omega \times [0,\infty)$.

    $ii)$ If $\|u_0\|_\infty \leq 1 < \|v_0\|_\infty$, consider the pair of functions
    \begin{equation*}
     U (t) = 1, \;\;\; V (t) = \|v_0\|_\infty - Ct,
    \end{equation*}
    with $0<C\leq \mu \|v_0\|_\infty^{-\beta}$, and define $t_1 \in [t_0,\infty)$ as the time in which $V(t_1)=1$. Then we can prove that $(U, V)$ is a supersolution of \eqref{1.1} in $[0,t_1)$. Indeed, we know that $U(0) \geq u_0(x)$ and $V(0) \geq v_0(x)$ for every $x\in\overline\Omega$, and, for every $t\in[0,t_1)$:
    \begin{equation*}
        \int_\Omega J(x-y) dy + \int_{\mathbb{R}^N\backslash\Omega} J(x-y) dy - 1 - \lambda V(t)^{-p} \leq - \lambda \|v_0\|_\infty^{-p} \leq 0 = U_t (t),
    \end{equation*}
    \begin{equation*}
        V(t) \int_\Omega J(x-y) dy + \int_{\mathbb{R}^N\backslash\Omega} J(x-y) dy - V(t) - \mu V(t)^{-\beta} \leq - \mu \|v_0\|_\infty^{-\beta} \leq -C = V_t (t).
    \end{equation*}
    Then by Lemma \ref{lemacomparacion}, $u(x,t) \leq 1$ and $v(x,t) \leq V(t)$ for every $x\in\overline\Omega$ and $t\in[0,t_1)$. By continuity of the solution, $v(x,t_1) \leq V(t_1) = 1$.
    Since there is a time $t_1 \in [0,\infty)$ in which $u(x,t_1) \leq 1$ and $v(x,t_1) \leq 1$ for every $x\in\overline\Omega$, we are again in case $i)$ and we can use the same argument.

    $iii)$ If $\|v_0\|_\infty \leq 1 < \|u_0\|_\infty$, the reasoning is analogous to that of case $ii)$.

    $iv)$ If $\|u_0\|_\infty, \|v_0\|_\infty > 1$, consider the pair of functions
    \begin{equation*}
     U(t) = \|u_0\|_\infty - \lambda \|v_0\|_\infty^{-p} \|u_0\|_\infty^{-\alpha} t, \;\;\; V(t) = \|v_0\|_\infty - \mu \|u_0\|_\infty^{-q} \|v_0\|_\infty^{-\beta} t,
    \end{equation*}
    and define $t_0$ as the first time in which either $U(t_0)=1$ or $V(t_0)=1$. Without loss of generality, assume $U(t_0)=1$. Then we can prove that $(U,V)$ is a supersolution of \eqref{1.1} in $[0,t_0)$. Indeed, we have that $U(0)\geq \|u_0\|_\infty$, $V(0)\geq \|v_0\|_\infty$ and, since $U(t),V(t)\geq 1$ for every $t\in[0,t_0]$:
\begin{equation*}
    U(t) \int_\Omega J(x-y) dy + \int_{\mathbb{R}^N\backslash\Omega} J(x-y) dy - U(t) - \lambda V(t)^{-p} U(t)^{-\alpha} \leq - \lambda \|v_0\|_\infty^{-p} \|u_0\|_\infty^{-\alpha} = U_t (t),
\end{equation*}
\begin{equation*}
    V(t) \int_\Omega J(x-y) dy + \int_{\mathbb{R}^N\backslash\Omega} J(x-y) dy - V(t) - \mu U(t)^{-q} V(t)^{-\beta} \leq - \mu \|u_0\|_\infty^{-q} \|v_0\|_\infty^{-\beta} = V_t (t).
\end{equation*}
Then by Lemma \ref{lemacomparacion} we know that $u(x,t) \leq U(t)$ and $v(x,t) \leq V(t)$ for every $x\in\overline\Omega$ and $t\in[0,t_0)$. By continuity of these functions, it is also true that $u(x,t_0) \leq U(t_0) =  1$ and $v(x,t_0) \leq V(t_0)$. Then we are under the conditions of case $ii)$, we can use the same argument and the first part of the lemma is proven.

Let us prove now the final statement of the lemma. Let $(w,z)$ be a stationary solution of \eqref{1.1}. Suppose first that $\|w\|_\infty >1$ or $\|z\|_\infty>1$. This is a globally defined solution and, following the previous argument, we reach a contradiction. Suppose now that there exists some $x_1 \in \overline\Omega$ such that $w(x_1) \leq \mu^{1/q}$. Since $(w,z)$ is a stationary solution, we have that
\begin{equation*}
    1 > \int_\Omega J(x_1-y)z(y) dy + \int_{\mathbb{R}^N \backslash \Omega} J(x_1-y) dy - z (x_1) = \mu w^{-q}(x_1)z^{-\alpha}(x_1) \geq \mu w^{-q}(x_1) \geq 1.
\end{equation*}
which is a contradiction. If there exists $x_2\in\overline\Omega$ such that $z(x_2) \leq \lambda^{1/p}$, the argument is analogous and we reach a contradiction as well, so the result is proven.
\end{proof}

Next we will prove an important relation between the solution with constant initial data of value $1$ and the stationary solutions of the system.

\begin{lema}\label{lema.1quencheatodo}
Let $(u_1,v_1)$ be the solution of system \eqref{1.1} with initial data $u(x,0) \equiv v(x,0) \equiv 1$. Then, the solution $(u_1,v_1)$ satisfies either:

(i) it presents quenching in finite time and all solutions present quenching in finite time, or

(ii) it is well-defined for all $t\in[0,\infty)$ and it converges uniformly in space from above to a stationary solution $(w,z)$.
\end{lema}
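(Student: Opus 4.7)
The constant initial data $u_0 \equiv v_0 \equiv 1$ satisfies the hypothesis of Corollary \ref{lemamenorque0}, since both integral expressions on the right-hand side of \eqref{1.1} reduce to $-\lambda$ and $-\mu$ respectively, both non-positive. Hence $(u_1)_t, (v_1)_t \leq 0$ throughout the existence interval, and Corollary \ref{lema.MNsuper} gives $0 < u_1, v_1 \leq 1$. For part (i), assume $(u_1, v_1)$ quenches at $T_1 < \infty$ and suppose, for contradiction, that some other solution $(u, v)$ is global. Lemma \ref{lema.globalesmenorque1} provides $t_1 \geq 0$ with $u(\cdot, t_1), v(\cdot, t_1) \leq 1$; the time-translate $(x, t) \mapsto (u_1(x, t-t_1), v_1(x, t-t_1))$ then solves \eqref{1.1} on $[t_1, t_1+T_1)$ with value $(1,1)$ at $t = t_1$, dominating $(u, v)(\cdot, t_1)$. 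The comparison principle (Lemma \ref{lemacomparacion}) yields $u(x,t) \leq u_1(x, t-t_1)$ and $v(x,t) \leq v_1(x, t-t_1)$ on this interval, and quenching of $(u_1, v_1)$ at $T_1$ forces $(u,v)$ to quench by $t_1 + T_1$, contradicting globality.

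For part (ii), assume $(u_1, v_1)$ is global. Monotonicity gives pointwise limits $w(x) := \lim_{t \to \infty} u_1(x, t)$ and $z(x) := \lim_{t \to \infty} v_1(x, t)$ in $[0, 1]$. The decisive step is to establish the strictly positive lower bounds $w \geq c_1 := \min\{1, (\lambda/2)^{1/\alpha}\}$ and $z \geq c_2 := \min\{1, (\mu/2)^{1/\beta}\}$. If $u_1(x_0, t_\ast) \leq c_1$ at some $(x_0, t_\ast)$, then by monotonicity $u_1(x_0, t) \leq c_1$ for all $t \geq t_\ast$, and combining $v_1 \leq 1$ with $\int_{\mathbb{R}^N} J = 1$ gives
\[ (u_1)_t(x_0, t) \leq 1 - \lambda u_1(x_0, t)^{-\alpha} \leq 1 - \lambda c_1^{-\alpha} \leq -1, \]
which drives $u_1(x_0, \cdot)$ to zero in finite additional time $\leq c_1$, contradicting global existence. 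The analogous bound for $z$ is symmetric.

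With these positive lower bounds in hand, the reaction terms remain uniformly bounded, hence so do $(u_1)_t, (v_1)_t$. Each term in $(u_1)_t(x, t) = J \ast u_1(x,t) - u_1(x,t) - \lambda v_1(x,t)^{-p} u_1(x,t)^{-\alpha}$ converges pointwise as $t \to \infty$ (by dominated convergence for the convolution, and by continuity of $r \mapsto r^{-p}, r^{-\alpha}$ on $[c_2, 1]$ and $[c_1, 1]$), so $(u_1)_t(x, t) \to L(x)$ for some $L$; since $\int_0^\infty (u_1)_t(x, s)\,ds = w(x) - 1$ is finite, necessarily $L(x) \equiv 0$, yielding
\[ J \ast w - w - \lambda z^{-p} w^{-\alpha} = 0, \qquad J \ast z - z - \mu w^{-q} z^{-\beta} = 0, \]
so $(w, z)$ is a stationary solution. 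Dini's theorem applied to the monotone decreasing continuous family $x \mapsto \int_\Omega J(x - y) [u_1(y, t) - w(y)] \, dy$ on the compact $\overline\Omega$ (which tends pointwise to zero by dominated convergence) gives $J\ast u_1 \to J\ast w$ uniformly, and combining this with the stationary equation and the uniform lower bounds produces uniform convergence $u_1(\cdot, t) \to w$, $v_1(\cdot, t) \to z$ in $\mathcal{C}(\overline\Omega)$, and hence continuity of $(w, z)$.

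\emph{Main obstacle.} The crux of the argument is ruling out ``quenching at infinity'' by establishing the strict positivity of the limits $w$ and $z$: this step exploits the upper bound $u_1, v_1 \leq 1$ to force the singular absorption to dominate as soon as a component falls below the critical threshold $c_1$ or $c_2$, which then drives it to zero in finite additional time and contradicts global existence. Promoting the resulting pointwise convergence to uniform convergence, so that the limit is in fact a classical (continuous) stationary solution, is the other technical point that must be handled with care.
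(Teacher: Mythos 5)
Your part (i) is the paper's argument verbatim, and your positivity step in part (ii) is correct and in fact a little cleaner than the paper's: you derive $(u_1)_t(x_0,t)\le 1-\lambda u_1^{-\alpha}(x_0,t)\le -1$ directly once $u_1(x_0,\cdot)$ drops below $\min\{1,(\lambda/2)^{1/\alpha}\}$, forcing finite-time quenching, whereas the paper passes through the $v$-equation and Lemma \ref{condicionquenching}. The identification of the pointwise limit $(w,z)$ as a solution of the stationary equations is also fine.

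The gap is in the last step, which you dispatch with ``combining this with the stationary equation and the uniform lower bounds produces uniform convergence \dots and hence continuity of $(w,z)$.'' This does not follow. What you actually know is that $J\ast w$ and $J\ast z$ are continuous, i.e.\ that $w+\lambda z^{-p}w^{-\alpha}$ and $z+\mu w^{-q}z^{-\beta}$ are continuous; but the map $(a,b)\mapsto\bigl(a+\lambda b^{-p}a^{-\alpha},\,b+\mu a^{-q}b^{-\beta}\bigr)$ is not injective in general --- already in one variable, $\partial_a\bigl(a+\lambda b^{-p}a^{-\alpha}\bigr)=1-\lambda\alpha b^{-p}a^{-\alpha-1}$ changes sign, so $a\mapsto a+\lambda b^{-p}a^{-\alpha}$ is two-to-one near its minimum and a function jumping between the two branches has a continuous image. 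Hence continuity of $J\ast w$ gives you nothing about continuity of $w$ without extra information, and without continuity of $w$ you cannot invoke Dini to upgrade the pointwise convergence $u_1(\cdot,t)\searrow w$ to uniform convergence (your Dini application to $J\ast u_1$ is fine but does not transfer back through the non-injective nonlinearity). This is precisely where the bulk of the paper's proof lives: it observes that $((u_1)_t,(v_1)_t)$ is a nonpositive subsolution of a linear ODE system tending to $0$, so the equilibrium is stable and the determinant condition \eqref{condicion.positividad.continuidad} (and its limit form \eqref{condicion.positividad.reducida}) must hold; this positivity is exactly what pins $(w,z)$ onto the ``good branch,'' and a Mean Value Theorem computation then shows that any jump discontinuity of $w$ or $z$ would force that determinant to be negative, a contradiction. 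You flagged this as ``the other technical point that must be handled with care,'' but you did not handle it, and it is the genuinely hard part of statement (ii).
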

\begin{proof}
    Assume first that $(u_1,v_1)$ presents quenching in finite time $T$ and let us suppose that there exists $(u,v)$ a global solution of \eqref{1.1}. By Lemma \ref{lema.globalesmenorque1}, there exists a time $t_1\in[0,\infty)$ such that $u(x,t) \leq 1$ and $v(x,t) \leq 1$ for every $t\in[t_1,\infty)$. Therefore, $(u_1(x,t),v_1(x,t))$ is a supersolution of system \eqref{1.1} with initial data $(u(x,t_1),v(x,t_1))$. By Lemma \ref{lemacomparacion}, $u_1(x,t) \geq u(x,t+t_1)$ and $v_1(x,t) \geq v(x,t+t_1)$ for every $t\in[0,T)$. Since $(u_1(x,t),v_1(x,t))$ quenches in time $T$, $(u,v)$ will also quench in some time $\widetilde T\leq T+t_1$, which is a contradiction. Therefore, if $(u_1,v_1)$ presents quenching then every solution of \eqref{1.1} also quenches.

    Assume now that $(u_1,v_1)$ is a global solution. Define $\tilde{u} = (u_1)_t$ and $\tilde{v}=(v_1)_t$. Since $\tilde{u}(x,0)\leq0$ and $\tilde{v} (x,0) \leq 0$, we know that $u_1(x,t)$ and $v_1(x,t)$ are both monotonically non-increasing on $t$ for every $x\in\overline\Omega$ thanks to Corollary \ref{lemamenorque0}. Moreover, $u_1(x,t)$ and $v_1(x,t)$ are both bounded from below by zero for every $(x,t)\in\overline\Omega\times[0,\infty)$. This implies that the solution converges point-wise to some $(w,z)$ in $\overline\Omega$.

    We can prove that $w$ and $z$ are strictly positive. Indeed, suppose that there exists some $\overline{x}\in\overline\Omega$ such that $w(\overline{x})=0$ (the argument is the same if $z(\overline{x})=0$). This means that, for every $\varepsilon>0$, there exists a time $t_\varepsilon \in [0,\infty)$ such that $u_1 (\overline{x}, t) < \varepsilon$ for every $t\geq t_\varepsilon$. Taking $\varepsilon< \mu^{1/q}$ and using Corollary \ref{lema.MNsuper}, we get that
    \begin{equation*}
        \tilde{v} (\overline{x},t) \leq 1 - v_1(\overline{x},t) - \mu \varepsilon^{-q} v_1^{-\beta}(\overline{x},t) \leq 1 - \mu \varepsilon^{-q} < 0
    \end{equation*}
    for every $t\geq t_\varepsilon$. Using the fact that $(u_1,v_1)$ is a global solution, we can then take some time in which $u_1(\overline{x},t)$ and $v_1(\overline{x},t)$ are as small as we want. Using Lemma \ref{condicionquenching}, we get that $(u_1,v_1)$ suffers quenching in finite time, which is a contradiction.

    To prove that $(w,z)$ is a stationary solution of our system, first we take limits on every term of \eqref{1.1} and use the Dominated Convergence Theorem to get, for every $x\in\overline\Omega$:
    \begin{align*}
        \lim_{t\rightarrow\infty} \tilde{u}(x,t) &= \int_\Omega J(x-y)w(y) dy + \int_{\mathbb{R}^N \backslash \Omega} J(x-y) dy - w(x) - \lambda z^{-p} w^{-\alpha}(x). \\
        \lim_{t\rightarrow\infty} \tilde{v}(x,t) &= \int_\Omega J(x-y)z(y) dy + \int_{\mathbb{R}^N \backslash \Omega} J(x-y) dy - z(x) - \mu w^{-q} z^{-\beta}(x).
    \end{align*}
    We know that $u_1$ and $v_1$ are bounded continuous functions and these limits exist, so they both must be zero.

    Then we have to prove that $w,z$ are continuous in $\overline\Omega$. Since $u_1$ and $v_1$ are non-increasing in time, for some fixed $x_0\in \overline\Omega$ and $t_0\in[0,\infty)$, we know that
    \begin{align*}
        \tilde{u}_t (x_0,t) &\leq -\tilde{u}(x_0,t)+p \lambda v_1^{-p-1}(x_0,t_0)u_1^{-\alpha}(x_0,t_0)\tilde{v}(x_0,t) + \alpha\lambda v_1^{-p}(x_0,t_0) u_1^{-\alpha-1}(x_0,t_0) \tilde{u}(x_0,t),\\
        \tilde{v}_t (x_0,t) &\leq -\tilde{v}(x_0,t)+q\mu u_1^{-q-1} (x_0,t_0)v_1^{-\beta}(x_0,t_0) \tilde{u}(x_0,t) + \beta\mu u_1^{-q} (x_0,t_0)v_1^{-\beta-1}(x_0,t_0) \tilde{v}(x_0,t),
    \end{align*}
    for every $t\in[t_0,\infty)$. This means that $(\tilde{u},\tilde{v})(x_0,t)$ is a subsolution of a linear system $\vec{\phi}_t = A \vec{\phi}$ whose constant matrix is
    \begin{equation*}
        A = \begin{pmatrix}
            -1 + \alpha\lambda v_1^{-p}(x_0,t_0) u_1^{-\alpha-1}(x_0,t_0) & p \lambda v_1^{-p-1}(x_0,t_0)u_1^{-\alpha}(x_0,t_0) \\
            q\mu u_1^{-q-1} (x_0,t_0)v_1^{-\beta}(x_0,t_0) & -1 + \beta\mu u_1^{-q} (x_0,t_0)v_1^{-\beta-1}(x_0,t_0)
        \end{pmatrix}.
    \end{equation*}
    Furthermore, we know that $\tilde{u}(x_0,t),\tilde{v}(x_0,t) \leq 0$ and $\tilde{u}(x_0,t),\tilde{v}(x_0,t) \xrightarrow{t} 0$, so the equilibrium point $(0,0)$ of this system is stable, and therefore the eigenvalues of $A$ must be both negative. Then its determinant is positive, which means that
    \begin{align} \label{condicion.positividad.continuidad}
        &(1 - \alpha\lambda v_1^{-p}(x_0,t_0) u_1^{-\alpha-1}(x_0,t_0))(1 - \beta\mu u_1^{-q} (x_0,t_0)v_1^{-\beta-1}(x_0,t_0))\\
        & - pq\lambda\mu v_1^{-p-\beta-1}(x_0,t_0) u_1^{-q-\alpha-1}(x_0,t_0) > 0. \nonumber
    \end{align}
    Note that this is true for every $x_0\in\overline\Omega$ and $t_0\in[0,\infty)$. We can go even further and note that, since $\tilde{u},\tilde{v}\leq 0$, $\tilde{v}$ also verifies that, for fixed $x_0\in \overline\Omega$ and $t_0\in[0,\infty)$,
    \begin{equation*}
        \tilde{v}_t(x_0,t) \leq (-1 + \mu \beta u_1^{-q}(x_0,t_0)v_1^{-\beta-1}(x_0,t_0)) \tilde{v}(x_0,t),
    \end{equation*}
    for every $t\in[t_0,\infty)$. Then $\tilde{v}(x_0,t)$ is a subsolution of this other linear equation and, with a similar argument, we get that
    \begin{equation*}
        1 - \mu \beta u_1^{-q}(x_0,t_0)v_1^{-\beta-1}(x_0,t_0) > 0
    \end{equation*}
    for every $(x_0,t_0) \in \overline\Omega \times [0,\infty)$. Note that this together with \eqref{condicion.positividad.continuidad} implies that we also have
    \begin{equation} \label{condicion.positividad.reducida}
        1 - \mu \beta w^{-q}(x_0)z^{-\beta-1}(x_0) > 0
    \end{equation}
    for every $x_0\in\overline\Omega$. Indeed, if this wasn't the case, this quantity would vanish and, taking limits in time in \eqref{condicion.positividad.continuidad}, we would have that
    \begin{equation*}
        - pq\lambda\mu z^{-p-\beta-1}(x_0) w^{-q-\alpha-1}(x_0) \geq 0,
    \end{equation*}
    which is a contradiction.

    Next, we recall that we have already proved that
     \begin{align*}
        \int_\Omega J(x-y)w(y) dy + \int_{\mathbb{R}^N \backslash \Omega} J(x-y) dy &= w(x) + \lambda z^{-p} w^{-\alpha}(x). \\
        \int_\Omega J(x-y)z(y) dy + \int_{\mathbb{R}^N \backslash \Omega} J(x-y) dy &= z(x) + \mu w^{-q} z^{-\beta}(x).
    \end{align*}
    Since the left hand side of these equations is continuous in $\overline\Omega$, the right hand side is continuous as well. Suppose without loss of generality that $w(x)$ is not continuous at some point $\hat{x} \in \overline\Omega$, that is, there exists some $\{x_n\}_n \subset \overline\Omega$ such that
    \begin{equation*}
        x_n \longrightarrow \hat{x}, \;\; w(x_n) \longrightarrow A > w(\hat{x}), \;\; z(x_n) \longrightarrow B,
    \end{equation*}
    as $n\to \infty$ with some $A,B>0$ (if $A<w(\hat{x})$ the argument is very similar). Using that $(w+\lambda z^{-p}w^{-\alpha})(x)$ is continuous and applying the Mean Value Theorem, we get
    \begin{align} \label{inequality.assuming.discontinuity.1}
        0 =& \, (A-w(\hat{x})) + \lambda(A^{-\alpha}B^{-p} - w^{-\alpha}(\hat{x})z^{-p}(\hat{x})) \\
        >& \,(A- w(\hat{x})) - \lambda \alpha w^{-\alpha-1}(\hat{x}) z^{-p}(\hat{x})(A-w(\hat{x})) - \lambda p w^{-\alpha}(\hat{x})z^{-p-1}(\hat{x})(B-z(\hat{x})) \nonumber
    \end{align}
    Then doing the same with the continuous function $(z+\mu w^{-q}z^{-\beta})(x)$:
    \begin{align} \label{inequality.assuming.discontinuity.2}
        0 =& \, (B-z(\hat{x})) + \mu(A^{-q}B^{-\beta} - w^{-q}(\hat{x})z^{-\beta}(\hat{x})) \\
        >& \,(1 -\mu \beta w^{-q}(\hat{x})z^{-\beta-1}(\hat{x}))(B- z(\hat{x})) - \mu q w^{-q-1}(\hat{x}) z^{-\beta}(\hat{x})(A-w(\hat{x})). \nonumber
    \end{align}
    Plugging \eqref{inequality.assuming.discontinuity.2} into \eqref{inequality.assuming.discontinuity.1}, recalling the strict positivity of the quantity \eqref{condicion.positividad.reducida}, we get:
    \begin{align*}
        0 >& \, (1 - \lambda \alpha w^{-\alpha-1}(\hat{x}) z^{-p}(\hat{x}))(1 -\mu \beta w^{-q}(\hat{x})z^{-\beta-1}(\hat{x})) (A-w(\hat{x})) \\
        &\, - \lambda\mu pq w^{-q-\alpha-1}(\hat{x})z^{-p-\beta-1}(\hat{x})(A-w(\hat{x})).
    \end{align*}
    This implies that
    \begin{equation*}
        (1 - \lambda \alpha w^{-\alpha-1}(\hat{x}) z^{-p}(\hat{x}))(1 -\mu \beta w^{-q}(\hat{x})z^{-\beta-1}(\hat{x}))- \lambda\mu pq w^{-q-\alpha-1}(\hat{x})z^{-p-\beta-1}(\hat{x}) := -\delta < 0.
    \end{equation*}
    Since $w$ and $z$ are the limits in time of $u_1$ and $v_1$, there exists $t_1\in[0,\infty)$ such that
    \begin{equation*}
        (1 - \lambda \alpha u_1^{-\alpha-1} v_1^{-p}(\hat{x},t_1))(1 -\mu \beta u_1^{-q}v_1^{-\beta-1}(\hat{x},t_1)) - \lambda\mu pq u_1^{-q-\alpha-1}v_1^{-p-\beta-1}(\hat{x},t_1) \leq -\frac{\delta}{2} < 0,
    \end{equation*}
    which is a contradiction with inequality \eqref{condicion.positividad.continuidad}.

    We have concluded that $(w,z)$ is a continuous stationary solution of system \eqref{1.1}. Finally, since $(u_1,v_1)(x,t)$ are continuous, non-increasing in $t$ and they converge pointwise to $(w,z)(x)$, which are also continuous, Dini's Theorem gives us that the convergence is actually uniform from above.
\end{proof}

We can use these lemmas to prove Theorem \ref{teo.estacionarias}.

\begin{proof}[Proof of Theorem \ref{teo.estacionarias}]
    Lemma \ref{lema.existenciaestacionarias} gives us a small neighbourhood around the origin $U^\prime \subset \mathbb{R}^2$ in which the system accepts stationary solutions. Moreover, it follows from Lemma \ref{lema.globalesmenorque1} that there can be no stationary solutions in our system if $\lambda \geq 1$ or $\mu \geq 1$.

    Next we notice that, if there exists $(w_0,z_0)$ a stationary solution of \eqref{1.1} with parameters $(\lambda_0,\mu_0)$, then it is a subsolution of \eqref{1.1} for parameters $(\lambda,\mu)$ with $\lambda\leq \lambda_0$ and $\mu\leq \mu_0$. Then define $(u_1,v_1)$ as the solution of \eqref{1.1} with parameters $(\lambda,\mu)$ and initial data $u(x,0)\equiv v(x,0) \equiv 1$. Then, due to Lemma \ref{lemacomparacion}, $(u_1,v_1)$ is bounded from below by $(w_0,z_0)$. Using Lemma \ref{lema.1quencheatodo}, $(u_1,v_1)$ converges to a stationary solution $(w,z)$. Moreover $w(x)\ge w_0(x)$, $z(x)\ge z_0(x)$, which gives us a monotonicity property of these stationary solutions with respect to the parameters $(\lambda,\mu)$.

    Consider now a fixed $\lambda$ and define
    \begin{equation*}
    \mu_\lambda^* = \sup\{ \mu : \text{a stationary solution exists for } (\lambda,\mu) \}.
    \end{equation*}
    Thanks to Lemma \ref{lema.globalesmenorque1} and the monotonicity of stationary solutions with respect to $(\lambda,\mu)$, it is clear that, for every $x\in\overline\Omega$:
    \begin{equation*}
        w_{\lambda,\mu_\lambda^*} (x) = \lim_{\mu\rightarrow \mu_\lambda^*} w_{\lambda,\mu} (x) > 0, \;\;\; z_{\lambda,\mu_\lambda^*} (x) = \lim_{\mu\rightarrow \mu_\lambda^*} z_{\lambda,\mu} (x) > 0.
    \end{equation*}
    Then taking the limit $\mu \to \mu_\lambda^\ast$ in the equations of \eqref{1.1}, we see that $(w_{\lambda,\mu_\lambda^*}, z_{\lambda,\mu_\lambda^*} )$ is a stationary solution of \eqref{1.1} with parameters $(\lambda, \mu^\ast_\lambda)$.

    We conclude that there exists a neighbourhood $U$ of $(0,0)$ in $\mathbb{R}^2$ such that there are stationary solutions if and only if $(\lambda, \mu) \in \overline{U} \cap ((0,\infty) \times (0,\infty))$. Finally, since there are no stationary solutions for $(\lambda,\mu) \notin \overline{U} \cap ((0,\infty) \times (0,\infty))$, every solution there quenches by Lemma \ref{lema.1quencheatodo}.
\end{proof}

\end{section}

\begin{section}{Simultaneous and Non-simultaneous quenching}

To prove Theorem~\ref{teo.simultaneo} we need to consider two different cases: $\max\{p-\beta,q-\alpha\}\ge 1$ and $\max\{p-\beta,q-\alpha\}<1$. We will be able to prove that a functional inequality will give us all the information in the former case, while the latter is much more involved. This happens because, whenever $\max\{p-\beta,q-\alpha\}<1$, the coupled absorption terms are both sufficiently weak so that, choosing appropriate initial data, both simultaneous and non-simultaneous quenching coexist.

\subsection{The case $\max\{p-\beta,q-\alpha\}\geq1$.}

We will start this case by proving a key inequality.

\begin{prop} \label{prop.des.clave}
    Let $(u,v)$ be a solution of system \eqref{1.1} defined up to time $T\in(0,+\infty]$ and consider the function $\Psi_a: \mathcal{C}^1([0,T),\mathcal{C}(\overline\Omega)) \longrightarrow \mathcal{C}^1([0,T),\mathcal{C}(\overline\Omega))$ defined as
    \begin{equation}\label{primitiva}
        \Psi_a[g](x,t)=\left\{
        \begin{array}{ll}
        \frac{g^{1-a}}{1-a}(x,t) & a\ne1\\
        \ln |g(x,t)|\qquad & a=1.
        \end{array}\right.
    \end{equation}
    Then there exists some $D>0$ such that, for every $(x,t)\in\overline\Omega\times[0,T)$:
    \begin{equation}\label{des.clave}
        \left|\mu \Psi_{q-\alpha}[u](x,t)-\lambda\Psi_{p-\beta}[v](x,t)\right|\le D.
    \end{equation}
\end{prop}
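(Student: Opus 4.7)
\emph{Proof plan.} The proposition rests on a cancellation identity specifically engineered by the choice of the coefficients $\mu,\lambda$ in front of the antiderivatives $\Psi_{q-\alpha}[u],\Psi_{p-\beta}[v]$, together with a standard a priori bound on the nonlocal diffusion operator.

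I first note that $\Psi_a$ from \eqref{primitiva} is an antiderivative of $x\mapsto x^{-a}$ whether or not $a=1$, so $\partial_t\Psi_{q-\alpha}[u]=u^{\alpha-q}u_t$ and $\partial_t\Psi_{p-\beta}[v]=v^{\beta-p}v_t$. Substituting the equations \eqref{1.1} and using $u^{\alpha-q}\cdot u^{-\alpha}=u^{-q}$ and $v^{\beta-p}\cdot v^{-\beta}=v^{-p}$,
\begin{align*}
\partial_t\Psi_{q-\alpha}[u] &= u^{\alpha-q}\bigl(J\ast\hat u-u\bigr)-\lambda u^{-q}v^{-p},\\
\partial_t\Psi_{p-\beta}[v] &= v^{\beta-p}\bigl(J\ast\hat v-v\bigr)-\mu u^{-q}v^{-p}.
\end{align*}
Multiplying the first by $\mu$ and the second by $\lambda$ and subtracting, the singular coupled terms $\mu\lambda u^{-q}v^{-p}$ cancel exactly, leaving the pointwise identity
$$\partial_t F(x,t)=\mu u^{\alpha-q}\bigl(J\ast\hat u-u\bigr)(x,t)-\lambda v^{\beta-p}\bigl(J\ast\hat v-v\bigr)(x,t),$$
where $F:=\mu\Psi_{q-\alpha}[u]-\lambda\Psi_{p-\beta}[v]$. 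Engineering this cancellation is precisely the reason for the particular exponents and coefficients in the statement.

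Next, I would use Corollary~\ref{lema.MNsuper} to obtain the a priori bounds $\hat u(x,t)\in[0,M]$ and $\hat v(x,t)\in[0,N]$ with $M,N\geq 1$, from which $|J\ast\hat u-u|\leq M$ and $|J\ast\hat v-v|\leq N$ follow at once by the non-negativity of $J$ and $\int_{\mathbb{R}^N}J=1$. Since $u_0,v_0$ are strictly positive continuous functions on the compact set $\overline\Omega$, the value $F(x,0)$ is also bounded uniformly in $x$.

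The remaining task, and the main obstacle, is to promote the pointwise identity for $\partial_t F$ into a uniform bound on $F$. In the easy regime $q-\alpha<1$ and $p-\beta<1$ the functions $\Psi_{q-\alpha}[\cdot],\Psi_{p-\beta}[\cdot]$ are individually bounded on the ranges $(0,M]$ and $(0,N]$ of $u,v$, so the bound $|F|\leq D$ is immediate. The delicate case is $\max\{q-\alpha,p-\beta\}\geq 1$ together with a component quenching: then the individual $\Psi$'s diverge and the claim $|F|\leq D$ expresses exactly that the two divergences must balance. Here I would integrate the identity in time from $0$ to $t$ and control $\int_0^t u^{\alpha-q}(J\ast\hat u-u)\,ds$ and $\int_0^t v^{\beta-p}(J\ast\hat v-v)\,ds$ by combining the pointwise diffusion bounds with additional a priori control on the time integrals of $u^{\alpha-q},v^{\beta-p}$---obtained either from the monotonicity of Corollary~\ref{lemamenorque0} (when the initial data make $u,v$ decreasing) or, in the globally-defined regime, from the positive lower bounds on $u,v$ derived in the argument of Lemma~\ref{lema.1quencheatodo}. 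A case split on whether $T<\infty$ or $T=\infty$ should then close the estimate and produce the desired constant $D$.
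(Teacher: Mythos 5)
Your setup is right and matches the paper's: the cancellation of the coupled singular term $\lambda\mu\,u^{-q}v^{-p}$ after forming $F=\mu\Psi_{q-\alpha}[u]-\lambda\Psi_{p-\beta}[v]$, the bound $|J\ast\hat u-u|\le M$, $|J\ast\hat v-v|\le N$ from Corollary~\ref{lema.MNsuper}, and the boundedness of $F(\cdot,0)$. But the proof has a genuine gap exactly where you flag "the main obstacle": you never actually produce the a priori control on $\int_0^t u^{\alpha-q}\,ds$ and $\int_0^t v^{\beta-p}\,ds$ in the delicate regime, and the two sources you propose would not deliver it. Corollary~\ref{lemamenorque0} only applies to special initial data (the proposition is for arbitrary solutions), and monotonicity of $u$ does not make $u^{-q}$ integrable up to a quenching time; the "positive lower bounds on $u,v$" fail precisely when a component quenches, which is the only case in which the estimate has content. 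Trying to get the integrability from the known quenching rates would be circular, since those rates are derived from this very proposition.

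The missing idea is to read the singular quantity off the \emph{other} equation. Since $\mu u^{-q}v^{-\beta}=J\ast\hat v-v-v_t$, one has $\mu u^{-q}=v^{\beta}\bigl(J\ast\hat v-v-v_t\bigr)$, hence
\begin{equation*}
\mu\int_0^t u^{-q}(x,s)\,ds\le N^{1+\beta}\,t+\left|\int_0^t v^{\beta}v_t(x,s)\,ds\right|\le N^{1+\beta}\left(t+\frac{1}{1+\beta}\right),
\end{equation*}
because the last integral is a total derivative of $v^{1+\beta}/(1+\beta)$ and $0<v\le N$. Combined with $|u^{\alpha-q}(J\ast\hat u-u)|\le M^{1+\alpha}u^{-q}$ this bounds $\int_0^t u^{\alpha-q}(J\ast\hat u-u)\,ds$, and the symmetric argument (using the $u$-equation to control $\int_0^t v^{-p}\,ds$) bounds the other diffusion integral; integrating your identity for $\partial_t F$ then closes the estimate. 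Without this cross-equation step the argument does not go through. (A side remark: this bound grows linearly in $t$, so—as in the paper's own write-up—the uniform constant $D$ is really only obtained for $T<\infty$; your proposed case split on $T=\infty$ would need the global-solution lower bounds, but that part is the easy regime anyway.)
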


\begin{proof}
    Consider $(u,v)$ a solution of \eqref{1.1} defined up to time $T\in(0,+\infty]$ with initial data $(u_0,v_0)$ and note that
    $$
    \mu u^{\alpha-q} u_t (x,t)- \lambda v^{\beta-p} v_t (x,t)= \mu u^{\alpha-q} \left( J*u-u \right)(x,t)- \lambda v^{\beta-p} \left( J*v-v\right)(x,t),
    $$
    for every $(x,t)\in\overline\Omega \times [0,T)$. Moreover, we know that  $0< \min\{u_0(x),v_0(x)\}\le C$ and Corollary \ref{lema.MNsuper} implies that $u(x,t)\le M$ and $v(x,t)\le N$ for every $(x,t) \in \overline\Omega \times [0,T)$. Then
$$
\begin{array}{rl}
\displaystyle\left|\int_0^t u^{\alpha-q} \left( J*u-u \right)(x,s)ds\right|\le &
\displaystyle M^{1+\alpha} \int_0^t u^{-q} (x,s)ds
= \frac{M^{1+\alpha}}{\mu} \int_0^t v^\beta (J*v-v-v_t)(x,s)ds\\
\le &\displaystyle\frac{M^{1+\alpha}N^{1+\beta}}{\mu} \left(T+\frac{1}{1+\beta}\right),
\end{array}
$$
which means that there exists a constant $D_1>0$ such that
\begin{equation*}
    \left|\int_0^t \Big(\mu u^{\alpha-q} u_t (x,s)- \lambda v^{\beta-p} v_t (x,s)\Big) ds\right| \leq D_1.
\end{equation*}
Moreover, we can also get a lower bound for this quantity:
$$
\begin{array}{rl}
\displaystyle\left|\int_0^t \Big(\mu u^{\alpha-q} u_t (x,s)- \lambda v^{\beta-p} v_t (x,s)\Big) ds\right| \ge &
\left|\mu \Psi_{q-\alpha}[u](x,t)-\lambda\Psi_{p-\beta}[v](x,t)\right| \\
& - \left|\mu\Psi_{q-\alpha}[u](x,0)-\lambda\Psi_{p-\beta}[v](x,0)\right|\\
\ge & \left|\mu\Psi_{q-\alpha}[u](x,t)-\lambda\Psi_{p-\beta}[v](x,t)\right|- D_2,
\end{array}
$$
for every $(x,t)\in\overline\Omega \times [0,T)$, where $D_2>0$. Using both bounds, we get our key inequality:
\begin{equation*}
\left|\mu \Psi_{q-\alpha}[u](x,t)-\lambda\Psi_{p-\beta}[v](x,t)\right|\le D,
\end{equation*}
for every $(x,t)\in\overline\Omega \times [0,T)$ and $D = D_1 + D_2 >0$.
\end{proof}

\begin{rem}\label{proposicion vacia}
    Notice that, for $\max\{p-\beta,q-\alpha\}<1$, the estimate in Proposition \ref{prop.des.clave} is empty because in this case the primitives $\Psi_{q-\alpha}[u]$ and $\Psi_{p-\beta}[v]$ are bounded functions. However, for the case $\max\{p-\beta,q-\alpha\}\ge1$, at least one of the primitives may be unbounded and we obtain all the information we need from the inequality.
\end{rem}

\begin{lema}\label{max(p,q)>=1}
Let $\max\{p-\beta,q-\alpha\}\ge 1$ and $(u,v)$ be a quenching solution of system \eqref{1.1} defined up to time $T$. Then,
\begin{enumerate}
\item If $p-\beta \ge 1 > q - \alpha$, then only the component $u$ quenches.
\item If $q-\alpha\ge 1>p-\beta$, then only the component  $v$ quenches.
\item If $p-\beta, q-\alpha\ge 1$, quenching is always simultaneous and $Q(u)=Q(v)$.
\end{enumerate}
\end{lema}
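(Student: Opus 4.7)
The plan is to reduce each of the three cases to Proposition~\ref{prop.des.clave} and exploit the boundary behavior of the primitive $\Psi_a$ defined in \eqref{primitiva}. The crucial dichotomy is: when $a\geq 1$, $\Psi_a[g](x,t)\to -\infty$ as $g(x,t)\to 0^+$ (either because $\ln g\to -\infty$ when $a=1$, or because $g^{1-a}\to +\infty$ divided by the negative denominator $1-a$ when $a>1$); whereas when $a<1$, we have $\Psi_a[g]=g^{1-a}/(1-a)$ with $1-a>0$, which is uniformly bounded on $\overline\Omega\times[0,T)$ thanks to the $L^\infty$ bound from Corollary~\ref{lema.MNsuper}.

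For case (1), $q-\alpha<1$ means $\mu\Psi_{q-\alpha}[u]$ is uniformly bounded on $\overline\Omega\times[0,T)$ by the remark above. Inequality \eqref{des.clave} then forces $\lambda\Psi_{p-\beta}[v]$ to be bounded as well. If we had $v(x_n,t_n)\to 0$ along some sequence with $t_n\to T^-$, then because $p-\beta\geq 1$ we would obtain $\Psi_{p-\beta}[v](x_n,t_n)\to -\infty$, contradicting this boundedness. Hence there exists $\delta>0$ with $v\geq\delta$ on $\overline\Omega\times[0,T)$, so by \eqref{eq-m} the quenching must occur in the $u$ component alone. Case (2) follows by the completely symmetric argument swapping the roles of $u$ and $v$.

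For case (3), both primitives blow up to $-\infty$ at the origin. Given $x\in Q(u)$ together with witnesses $t_n\to T^-$, $x_n\to x$ satisfying $u(x_n,t_n)\to 0$, we have $\mu\Psi_{q-\alpha}[u](x_n,t_n)\to -\infty$. By \eqref{des.clave}, $\lambda\Psi_{p-\beta}[v](x_n,t_n)\to -\infty$ as well. Since $\Psi_{p-\beta}$ is continuous and strictly monotone on $(0,N]$ with its only singularity at $0$, this forces $v(x_n,t_n)\to 0$, that is, $x\in Q(v)$; hence $Q(u)\subseteq Q(v)$, and the reverse inclusion follows symmetrically. Because $(u,v)$ is a quenching solution, at least one of $Q(u),Q(v)$ is nonempty, so both are nonempty and equal, and quenching is simultaneous.

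The main obstacle is essentially bookkeeping: tracking signs and the two branches ($a=1$ versus $a\neq 1$) in the definition of $\Psi_a$, and making sure that the implication $\Psi_{p-\beta}[v](x_n,t_n)\to -\infty \Longrightarrow v(x_n,t_n)\to 0$ genuinely uses the a priori bound $v\leq N$ from Corollary~\ref{lema.MNsuper}, so that the arguments remain in a compact interval $(0,N]$ on which the only way to drive $\Psi_{p-\beta}$ to $-\infty$ is to approach the singularity at $0$. Beyond this, the argument is a direct and essentially immediate application of the key inequality.
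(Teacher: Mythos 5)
Your argument is correct and is essentially identical to the paper's proof: both rest on Proposition~\ref{prop.des.clave} together with the observation that $\Psi_a[g]$ is uniformly bounded (via Corollary~\ref{lema.MNsuper}) when $a<1$ but diverges to $-\infty$ as $g\to 0^+$ when $a\ge 1$. Your write-up is if anything slightly more explicit about the monotonicity of $\Psi_a$ needed to convert $\Psi_{p-\beta}[v]\to-\infty$ back into $v\to 0$, but the route is the same.
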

\begin{proof}
Assume first that  $p-\beta\ge1>q-\alpha$ and suppose that $x_0\in Q(v)$. This means that there exists a sequence $\{(x_n,t_n)\}_n$ such that $(x_n,t_n)\to (x_0,T)$ and $v(x_n,t_n)\to 0$ as $n\to\infty$. Notice that $\Psi_{q-\alpha}[u](x,t) \leq C$ for every $(x,t)\in\overline\Omega$, but $\Psi_{p-\beta}[v](x_n,t_n)\to -\infty$ as $n\to\infty$. This is a contradiction with inequality \eqref{des.clave}. We conclude that $v$ cannot present quenching. The case $q-\alpha \geq 1 > p-\beta$ is analogous.

Now assume $p-\beta,q-\alpha\ge 1$ and take some $x_0\in Q(u,v)$. Then there exists a  sequence $\{(x_n,t_n)\}_n$ such that $(x_n,t_n)\to (x_0,T)$ and one of the components tends to zero as $n\to\infty$. Without loss of generality, $u(x_n,t_n)\to 0$ as $n\to\infty$. As $\Psi_{q-\alpha}[u](x_n,t_n)$  is unbounded, inequality \eqref{des.clave} implies that $\Psi_{p-\beta}[v](x_n,t_n)$ is also unbounded.
Then, $v(x_n,t_n)\to 0$, the quenching is always simultaneous and $Q(u)=Q(v)$.
\end{proof}

\begin{cor}
    Let $(u,v)$ be a quenching solution of system $\eqref{1.1}$ that suffers simultaneous quenching. Then either $p-\beta,q-\alpha\geq 1$ or $p-\beta,q-\alpha<1$.
\end{cor}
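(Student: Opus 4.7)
The plan is to argue by contraposition, using the trichotomy already established in Lemma \ref{max(p,q)>=1}. The claim to prove asserts that the pair $(p-\beta, q-\alpha)$ cannot straddle the value $1$ when quenching is simultaneous, so I would enumerate the only two mixed configurations that are excluded by the conclusion and derive a contradiction in each using the previous lemma.

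More concretely, suppose $(u,v)$ is a quenching solution with simultaneous quenching; by definition of ``simultaneous'' this means both components vanish at the quenching time $T$, so in particular both $Q(u)$ and $Q(v)$ are non-empty. The conclusion we want rules out precisely the two cases $p-\beta \ge 1 > q - \alpha$ and $q-\alpha \ge 1 > p - \beta$. In the first case, part (1) of Lemma \ref{max(p,q)>=1} asserts that only the component $u$ can quench, so $v$ must stay bounded away from zero up to time $T$; this directly contradicts $Q(v)\neq\emptyset$. The second case is handled symmetrically by part (2), which forces $u$ to stay bounded away from zero and contradicts $Q(u)\neq\emptyset$. Hence neither mixed case can occur, leaving only the two alternatives $p-\beta, q-\alpha \ge 1$ (the situation of part (3) of Lemma \ref{max(p,q)>=1}) and $p-\beta, q-\alpha < 1$ (addressed separately in the next subsection of the paper).

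There is essentially no obstacle here: the corollary is just the logical complement of parts (1) and (2) of Lemma \ref{max(p,q)>=1}, packaged as a statement about which parameter regimes are \emph{compatible} with simultaneous quenching rather than about which component quenches. The only small point that needs to be spelled out is that ``simultaneous quenching'' really does require $Q(u)$ and $Q(v)$ to both be non-empty, so that the conclusions of parts (1)--(2) (namely that one of the components remains uniformly positive on $\overline{\Omega}\times[0,T)$) genuinely contradict the hypothesis. Once this is observed, the proof reduces to a one-line case analysis and no further estimates are needed.
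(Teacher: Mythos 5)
Your proof is correct and is exactly the argument the paper intends: the corollary is stated immediately after Lemma \ref{max(p,q)>=1} with no separate proof, precisely because it is the contrapositive of parts (1) and (2) of that lemma, as you observe. Nothing further is needed.
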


Observe that Proposition \ref{prop.des.clave} also gives us the following results as a direct consequence.
\begin{cor} \label{lema.auxiliar.nosim}
Let $(u,v)$ be a quenching solution of system \eqref{1.1} and $x_0\in Q(u,v)$. Then,
\begin{enumerate}
\item If only the component $u$ quenches at $x_0$, then $q-\alpha<1$.
\item If only the component $v$ quenches at $x_0$, then $p-\beta<1$.
\end{enumerate}
\end{cor}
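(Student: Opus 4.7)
The plan is to deduce this corollary almost immediately from the key inequality \eqref{des.clave} in Proposition~\ref{prop.des.clave}, arguing by contradiction along sequences approaching $(x_0,T)$. The two assertions are symmetric, so I would only detail the first one; the second follows by exchanging the roles of $u,v$, $\alpha,\beta$ and $p,q$.

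First I would unfold what the hypothesis ``only the component $u$ quenches at $x_0$'' means: there exist sequences $t_n\to T^-$ and $x_n\to x_0$ with $u(x_n,t_n)\to 0$, while $x_0\notin Q(v)$, so there exist a radius $r>0$, a time $t^*<T$ and a constant $\delta>0$ such that $v(x,t)\geq\delta$ for every $(x,t)\in\overline\Omega\times[t^*,T)$ with $|x-x_0|<r$. Combined with the a priori upper bound $v\leq N$ from Corollary~\ref{lema.MNsuper}, this confines $v(x_n,t_n)$ to a compact subinterval of $(0,\infty)$ for all sufficiently large $n$, so $\Psi_{p-\beta}[v](x_n,t_n)$ stays bounded regardless of whether $p-\beta$ is smaller, equal to, or greater than $1$ (both branches of the definition \eqref{primitiva} are continuous on any compact subinterval of $(0,\infty)$).

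Next I would assume, aiming for a contradiction, that $q-\alpha\geq 1$. Along the sequence $(x_n,t_n)$, since $u(x_n,t_n)\to 0^+$, we have $\Psi_{q-\alpha}[u](x_n,t_n)\to -\infty$: indeed, if $q-\alpha=1$ then $\Psi_1[u]=\log|u|\to-\infty$, while if $q-\alpha>1$ then $\Psi_{q-\alpha}[u]=u^{1-(q-\alpha)}/(1-(q-\alpha))$ with numerator tending to $+\infty$ and negative denominator, so the quotient also tends to $-\infty$. Therefore the quantity
\begin{equation*}
\bigl|\mu\Psi_{q-\alpha}[u](x_n,t_n)-\lambda\Psi_{p-\beta}[v](x_n,t_n)\bigr|
\end{equation*}
is the absolute value of the sum of an unbounded term and a bounded one, and hence tends to $+\infty$. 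This contradicts the uniform bound \eqref{des.clave}, and we conclude $q-\alpha<1$, proving $(1)$.

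For $(2)$ the argument is identical after swapping $u\leftrightarrow v$, $q-\alpha\leftrightarrow p-\beta$, $\alpha\leftrightarrow\beta$ and $\mu\leftrightarrow\lambda$. I do not expect any genuine obstacle here; the only subtle point is keeping careful track of what ``only $u$ quenches at $x_0$'' means in terms of the quenching sets, so that one really does obtain a neighbourhood of $(x_0,T)$ on which $v$ is bounded away from zero and hence $\Psi_{p-\beta}[v]$ is bounded along the chosen sequence.
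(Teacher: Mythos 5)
Your argument is correct and is exactly the route the paper intends: the corollary is stated as a direct consequence of the key inequality \eqref{des.clave}, and your contradiction argument (with $\Psi_{q-\alpha}[u](x_n,t_n)\to-\infty$ while $\Psi_{p-\beta}[v](x_n,t_n)$ stays bounded because $v$ is confined to a compact subinterval of $(0,\infty)$ near $(x_0,T)$) is the same mechanism used in the proof of Lemma \ref{max(p,q)>=1}. Your careful unpacking of ``$x_0\notin Q(v)$'' into a uniform lower bound for $v$ on a space-time neighbourhood is a valid and welcome addition, since the paper leaves that step implicit.
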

\begin{cor}\label{lema-estimaciones}
Let $p-\beta,q-\alpha\ge1$ and $\{(x_n,t_n)\}_n$ a sequence such that both components present quenching as $n\to\infty$. Then,
\begin{enumerate}
\item for $p-\beta,q-\alpha>1$, $u^{1-q+\alpha}(x_n,t_n)\sim v^{1-p+\beta}(x_n,t_n)$;
\item for $p-\beta>1=q-\alpha$, $-\log(u(x_n,t_n))\sim v^{1-p+\beta}(x_n,t_n)$;
\item for $q-\alpha>1=p-\beta$, $u^{1-q+\alpha}(x_n,t_n)\sim -\log(v(x_n,t_n))$;
\item for $p-\beta=q-\alpha=1$, $u^{\mu}(x_n,t_n)\sim v^\lambda(x_n,t_n)$.
\end{enumerate}
\end{cor}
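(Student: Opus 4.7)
The plan is to read off all four estimates directly from the key inequality
$$\left|\mu \Psi_{q-\alpha}[u](x,t)-\lambda\Psi_{p-\beta}[v](x,t)\right|\le D$$
of Proposition~\ref{prop.des.clave}, specialized to the quenching sequence $\{(x_n,t_n)\}_n$. Since by hypothesis $u(x_n,t_n), v(x_n,t_n)\to 0$ and $p-\beta, q-\alpha \ge 1$, each of the primitives $\Psi_{q-\alpha}[u](x_n,t_n)$ and $\Psi_{p-\beta}[v](x_n,t_n)$ tends to $-\infty$: if the exponent strictly exceeds $1$, then $\Psi_a[g]=g^{1-a}/(1-a)$ with $1-a<0$ and $g^{1-a}\to+\infty$ as $g\to 0^+$; if it equals $1$, then $\Psi_a[g]=\ln g\to -\infty$. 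So I am in the elementary situation of two sequences, both divergent to $-\infty$, whose difference is uniformly bounded. Elementary algebra then forces their ratio to tend to $1$, so the two sequences are comparable in the sense of $\sim$.

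Having established this asymptotic comparability, the four cases amount to unpacking the explicit form of each $\Psi$. In case (1), both sides are pure powers and, after absorbing the nonzero same-sign constant factors $\mu/(1-q+\alpha)$ and $\lambda/(1-p+\beta)$, the estimate yields $u^{1-q+\alpha}(x_n,t_n) \sim v^{1-p+\beta}(x_n,t_n)$. In case (4), both sides are logarithms and the estimate reads $|\ln(u^\mu/v^\lambda)(x_n,t_n)| \le D$, which exponentiates to $e^{-D}\le u^\mu/v^\lambda \le e^D$ at $(x_n,t_n)$, i.e., $u^\mu \sim v^\lambda$. Cases (2) and (3) are the hybrid regimes, with one logarithm and one divergent negative power; the same bounded-difference argument produces $-\log u(x_n,t_n) \sim v^{1-p+\beta}(x_n,t_n)$ and $u^{1-q+\alpha}(x_n,t_n)\sim -\log v(x_n,t_n)$ respectively.

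There is no real obstacle here: once Proposition~\ref{prop.des.clave} is in hand, the corollary is a case-by-case bookkeeping exercise. The only conceptual point worth emphasizing is that the hypothesis $p-\beta,q-\alpha \ge 1$ is exactly what ensures both primitives diverge along the quenching sequence, making the key estimate nontrivial in the sense flagged by Remark~\ref{proposicion vacia}; without it the primitives would remain bounded and the inequality would convey no asymptotic information on the ratios.
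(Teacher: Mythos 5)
Your argument is correct and is exactly the paper's intended route: the paper states this corollary without proof as a "direct consequence" of Proposition~\ref{prop.des.clave}, and your case-by-case unpacking of the bounded difference of the two divergent primitives (ratio tending to $1$, constants of the same sign absorbed, exponentiation in the double-logarithmic case) is precisely that deduction.
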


This last lemma gives us some relations between the components of the solutions of \eqref{1.1} that will be very useful to get the quenching rates in Section 5.

\subsection{The case $\max\{p-\beta,q-\alpha\}<1$.}

This case is more involved because non-simultaneous and simultaneous quenching coexist. To prove that we use a shooting argument. Let $(u_0,v_0)$ be two functions such that
\begin{equation} \label{condicion.quenching.iv2.bis.1}
        \|u_0\|_\infty \leq \min \left\{ 1, \left( \frac{\mu}{2} \right)^{1/q}\right\}, \;\;\;\; \|v_0\|_\infty \leq \min \left\{ 1, \left( \frac{\lambda}{2} \right)^{1/p},\right\}.
    \end{equation}
and let $(u_\delta,v_\delta)$  be the solution of system \eqref{1.1} with initial data $(\delta u_0,(1-\delta)v_0)$ for $\delta\in(0,1)$. Notice that the hypotheses of Lemma \ref{condicionquenching} are satisfied for these initial data for all $x\in\overline\Omega$ with $M=N=\varepsilon=1$. Then the solution quenches at finite time
\begin{equation}\label{Tdelta}
T_\delta \leq \min \left\{ \min_{x\in\overline\Omega} \frac{\delta^{1+\alpha}}{1+\alpha}  u_0^{1+\alpha}(x), \min_{x\in\overline\Omega} \frac{(1-\delta)^{1+\beta}}{1+\beta} v_0^{1+\beta}(x) \right\} \leq 1,
\end{equation}
and
\begin{equation}\label{eq.decrece}
u^\alpha_\delta(u_\delta)_t(x,t)\le -1,\qquad  v^\beta_\delta(v_\delta)_t(x,t)\le -1,
\end{equation}
for every $(x,t)\in\overline\Omega \times [0,T_\delta)$. Integrating these inequalities between $t\in[0,T_\delta)$ and $T_\delta$,
\begin{equation}\label{eq.tasainf}
u_\delta (x,t)\ge (T_\delta-t)^{\frac{1}{1+\alpha}}, \qquad
v_\delta (x,t)\ge (T_\delta-t)^{\frac{1}{1+\beta}},
\end{equation}
for every $(x,t)\in\overline\Omega \times [0,T_\delta)$. Now, let us introduce the following disjoint sets:
\begin{equation*}
    \begin{array}{l}
        \displaystyle A^+=\{\delta\in(0,1)\,:\, \mbox{only the component $v_\delta$ quenches} \},\\
        \displaystyle A^-=\{\delta\in(0,1)\,:\, \mbox{only the component $u_\delta$ quenches}\},\\
        \displaystyle A=\{\delta\in(0,1)\,:\, \mbox{both components quench}\}.
    \end{array}
\end{equation*}

\begin{lema} \label{lema.sets.nonempty}
Let $p-\beta,q-\alpha<1$. The sets $A^+$ and $A^-$ are nonempty.
\end{lema}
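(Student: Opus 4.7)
The plan is a one-sided shooting at each endpoint of $(0,1)$: I would prove $A^-\ne\emptyset$ by taking $\delta$ close to $0$ and $A^+\ne\emptyset$ by taking $\delta$ close to $1$. The two arguments are symmetric in the roles of $u$ and $v$, so I describe only the first in detail.

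The first step would be to use \eqref{eq.tasainf} to convert smallness of $T_\delta$ into control on the singular integral $\int_0^{T_\delta} u_\delta^{-q}(x,s)\,ds$. The bound $u_\delta(x,s)\ge (T_\delta-s)^{1/(1+\alpha)}$ gives $u_\delta^{-q}(x,s)\le (T_\delta-s)^{-q/(1+\alpha)}$, and the hypothesis $q-\alpha<1$ is exactly $q/(1+\alpha)<1$, making $\int_0^{T_\delta}(T_\delta-s)^{-q/(1+\alpha)}\,ds=C_1 T_\delta^{1-q/(1+\alpha)}$ finite and $o(1)$ as $T_\delta\to 0$. This is the only place the hypothesis $q-\alpha<1$ enters.

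Next, I would multiply the equation for $v_\delta$ by $v_\delta^\beta$ and integrate on $[0,t]$ with $t<T_\delta$. Using the uniform bounds $0\le v_\delta\le 1$ and $0\le J\ast v_\delta\le 1$ from Corollary~\ref{lema.MNsuper}, the diffusive contribution is at most $(1+\beta)T_\delta$ in absolute value. Combining this with the previous step yields
\[
v_\delta^{1+\beta}(x,t)\ge [(1-\delta)v_0(x)]^{1+\beta}-(1+\beta)T_\delta-C_2\,T_\delta^{1-q/(1+\alpha)}
\]
for every $(x,t)\in\overline\Omega\times[0,T_\delta)$, with $C_2$ independent of $\delta$ and $x$. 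By \eqref{Tdelta}, $T_\delta=O(\delta^{1+\alpha})\to 0$ as $\delta\to 0^+$, while $(1-\delta)^{1+\beta}(\min_{\overline\Omega} v_0)^{1+\beta}\to(\min_{\overline\Omega} v_0)^{1+\beta}>0$. Hence for $\delta$ small enough the right-hand side stays above a positive constant uniformly in $(x,t)$, so $v_\delta$ does not quench. Since the solution does quench at time $T_\delta$ by Lemma~\ref{condicionquenching}, only $u_\delta$ can quench, which gives $\delta\in A^-$.

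The symmetric argument, exchanging the roles of $u,v$ and using $p-\beta<1$, places every $\delta$ sufficiently close to $1$ in $A^+$. The main conceptual point, and the only spot where the hypothesis of the lemma does real work, is that $q-\alpha<1$ and $p-\beta<1$ are precisely the thresholds that render $\int u_\delta^{-q}\,ds$ and $\int v_\delta^{-p}\,ds$ integrable over the full quenching interval; the rest is a routine bookkeeping of how each correction term vanishes with $T_\delta$ in the limits $\delta\to 0^+$ and $\delta\to 1^-$.
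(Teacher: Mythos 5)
Your proof is correct and is essentially the paper's argument: a shooting at the endpoints of $(0,1)$ in which the lower bound \eqref{eq.tasainf} on the quenching component controls the singular term in the other equation, whose integrability over $[0,T_\delta)$ is exactly the condition $q-\alpha<1$ (resp.\ $p-\beta<1$), and then $T_\delta\to0$ kills all correction terms. The only cosmetic difference is that the paper keeps the $-u_\delta^{1+\alpha}$ term and integrates with the factor $e^{(1+\alpha)t}$, whereas you bound the whole diffusive contribution crudely by $(1+\beta)T_\delta$; both work.
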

\begin{proof}
Notice that  by \eqref{eq.tasainf}
$$
\begin{array}{rl}
u_\delta^\alpha(u_\delta)_t (x,t)= &\displaystyle u_\delta^\alpha(x,t)\left(\int_\Omega J(x-y) u_\delta (y,t) + \int_{\mathbb{R}^N \backslash\Omega} J(x-y) dy \right) -u_\delta^{1+\alpha} (x,t)-\lambda v_\delta^{-p} (x,t)\\
\ge&\displaystyle   -u_\delta^{1+\alpha} (x,t) -\lambda (T_\delta -t)^{\frac{-p}{1+\beta}},
\end{array}
$$
for every $(x,t) \in \overline\Omega \times [0,T_\delta)$. Solving the differential inequality, we get
\begin{align*}
e^{(1+\alpha)t} u_\delta^{1+\alpha}(x,t)\ge& \;  u_\delta^{1+\alpha}(x,0)- \lambda(1+\alpha)\int_0^t e^{(1+\alpha)s}(T_\delta-s)^{\frac{-p}{1+\beta}}ds \\
\ge& \; \delta^{1+\alpha} u^{1+\alpha}_0(x)-\frac{\lambda(1+\alpha)(1+\beta)}{1+\beta-p} e^{(1+\alpha)T_\delta} ( T_\delta^{\frac{1+\beta-p}{1+\beta}}-(T_\delta-t)^{\frac{1+\beta-p}{1+\beta}}),
\end{align*}
for every $(x,t) \in \overline\Omega \times [0,T_\delta)$. Therefore,
$$
\lim_{t\nearrow T_\delta} e^{(1+\alpha)t} u_\delta^{1+\alpha}(x,t)\ge \delta^{1+\alpha} u^{1+\alpha}_0(x)- \frac{\lambda(1+\alpha)(1+\beta)}{1+\beta-p} e^{(1+\alpha)T_\delta} T_\delta^{\frac{1+\beta-p}{1+\beta}},
$$
for every $x\in\overline\Omega$. Since $T_\delta\to 0$ as $\delta\to 1$ and $p-\beta<1$, we can take $\delta$ sufficiently close to $1$ to get that
$$
\lim_{t\nearrow T_\delta} e^{(1+\alpha)t} u_\delta^{1+\alpha}(x,t) \ge \frac{\delta^{1+\alpha}}{2} u^{1+\alpha}_0(x) >0,
$$
for every $x\in\overline\Omega$. Therefore only the component $v_\delta$ quenches and the set $A^+$ is nonempty.

With the same argument, it is easy to see that taking $\delta$ sufficiently close to $0$ only the component $u$ quenches and the set $A^-$ is nonempty.
\end{proof}

We also want to prove that $A^+$ and $A^-$ are open sets. To do so, we first need to see that the quenching time is continuous with respect to $\delta$.

\begin{lema}
    Let $p-\beta,q-\alpha<1$ and consider $(u_\delta,v_\delta)$ the solution of system \eqref{1.1} with initial data $(\delta u_0,(1-\delta) v_0)$.  Then the quenching time $T_\delta$ of this solution is continuous with respect to $\delta$.
\end{lema}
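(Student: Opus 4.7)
The plan is to establish continuity at an arbitrary $\delta_0\in(0,1)$ by proving the two matching semi-continuity statements for every sequence $\delta_n\to\delta_0$: namely $\liminf_n T_{\delta_n}\geq T_{\delta_0}$ and $\limsup_n T_{\delta_n}\leq T_{\delta_0}$. Observe first that the initial data $(\delta_n u_0,(1-\delta_n)v_0)$ keep satisfying \eqref{condicion.quenching.iv2.bis.1} for $n$ large, so Lemma \ref{condicionquenching} together with \eqref{Tdelta} guarantees that each $T_{\delta_n}$ is finite and uniformly bounded.

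For lower semi-continuity, I would fix $\tau\in[0,T_{\delta_0})$ and exploit continuous dependence on initial data. Since $(u_{\delta_0},v_{\delta_0})$ is continuous and strictly positive on $\overline\Omega\times[0,\tau]$, there is a uniform lower bound $(u_{\delta_0},v_{\delta_0})\geq c>0$ on this compact set, which makes the right-hand side of \eqref{1.1} Lipschitz on a tube around this solution in $\mathcal{C}(\overline\Omega)\times \mathcal{C}(\overline\Omega)$. A standard Gronwall argument, combined with the comparison principle of Lemma \ref{lemacomparacion} to prevent the approximating solutions from leaving the tube, then yields the uniform convergence $(u_{\delta_n},v_{\delta_n})\to(u_{\delta_0},v_{\delta_0})$ on $\overline\Omega\times[0,\tau]$. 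In particular $(u_{\delta_n},v_{\delta_n})\geq c/2$ on this set for $n$ large, so $T_{\delta_n}>\tau$. Letting $\tau\nearrow T_{\delta_0}$ gives the inequality.

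For upper semi-continuity, I would use the absolute decay estimate \eqref{eq.decrece}, which holds globally in space and time for each $\delta$ because the hypothesis of Lemma \ref{condicionquenching} is satisfied at every $x\in\overline\Omega$. Given $\varepsilon>0$ small, pick $t_\varepsilon\in(0,T_{\delta_0})$ close to $T_{\delta_0}$ and $x_\varepsilon\in\overline\Omega$ with $\min\{u_{\delta_0}(x_\varepsilon,t_\varepsilon),v_{\delta_0}(x_\varepsilon,t_\varepsilon)\}<\varepsilon$; without loss of generality assume $u_{\delta_0}(x_\varepsilon,t_\varepsilon)<\varepsilon$. Applying the uniform convergence from the previous step on $[0,t_\varepsilon]$ yields $u_{\delta_n}(x_\varepsilon,t_\varepsilon)<2\varepsilon$ for $n$ large. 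Integrating \eqref{eq.decrece} from $t_\varepsilon$ onwards then gives
\begin{equation*}
u_{\delta_n}^{1+\alpha}(x_\varepsilon,t)\leq (2\varepsilon)^{1+\alpha}-(1+\alpha)(t-t_\varepsilon)
\end{equation*}
for every $t\in[t_\varepsilon,T_{\delta_n})$, which forces $T_{\delta_n}\leq t_\varepsilon+\tfrac{(2\varepsilon)^{1+\alpha}}{1+\alpha}$. Sending $n\to\infty$ and then $\varepsilon\to 0$ (so that $t_\varepsilon\to T_{\delta_0}$) produces $\limsup_n T_{\delta_n}\leq T_{\delta_0}$, completing the proof.

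The main technical hurdle is that continuous dependence can only be invoked on closed subintervals of $[0,T_{\delta_0})$, since the singular absorption terms cease to be Lipschitz as a component approaches zero; one cannot push the convergence all the way to $T_{\delta_0}$. This is nevertheless enough, because the upper half of the continuity statement is controlled independently by the sharp pointwise decay \eqref{eq.decrece}, so that control near the quenching time is never needed from continuous dependence alone.
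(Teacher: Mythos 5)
Your proof is correct and rests on the same two ingredients as the paper's: continuous dependence on initial data on compact subintervals of $[0,T_{\delta_0})$, where the solution stays bounded away from zero, and the universal decay estimate \eqref{eq.decrece} (equivalently \eqref{eq.tasainf}), which bounds the remaining time to quenching by a power of the solution's size at a given point. The paper merely packages your two one-sided inequalities into a single symmetric estimate via the joint minimum $m(t)$ of all four components, so the arguments are essentially identical.
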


\begin{proof}
We take $\delta$ and $\widetilde\delta$ such that $|\delta-\widetilde\delta|\le\mu$ and define the function
\begin{equation*}
m(t) = \min \left\{ \min_{x\in\overline\Omega} u_{\delta}(x,t),  \min_{x\in\overline\Omega} v_{\delta}(x,t),
\min_{x\in\overline\Omega} u_{\widetilde\delta}(x,t),  \min_{x\in\overline\Omega} v_{\widetilde\delta}(x,t) \right\}
\end{equation*}
for $t\in (0,T_0)$ with $T_0=\min\{T_\delta,T_{\widetilde\delta}\}$.
Observe that $m(t)\to 0$ as $t\to T_0$ and from \eqref{eq.decrece} it is a decreasing function. So there exists a time $t_0$ such that $m(t_0)=\varepsilon/3$. Moreover assuming that  $m(t_0)=\min_{x\in\overline\Omega} u_{\delta}(x,t_0)=u_{\delta}(x_0,t_0)$, we use the continuity of the solutions with respect to the initial data in the system \ref{1.1} to obtain that $u_{\widetilde\delta}(x_0,t_0)\le 2\varepsilon/3$ provided that $\mu$ is small enough.
Finally, by \eqref{eq.tasainf} we have
$$
|T_\delta-T_{\widetilde\delta}|\le |T_\delta-t_0|+|T_{\widetilde\delta}-t_0|\le u_{\delta}(x_0,t_0)^{1+\alpha}+u_{\widetilde\delta}(x_0,t_0)^{1+\alpha}\le C\varepsilon^{1+\alpha}
$$
and the result follows.
\end{proof}

\begin{lema} \label{lema.sets.open}
    Let $p-\beta,q-\alpha<1$. Then $A^+$  and $A^-$ are open sets.
\end{lema}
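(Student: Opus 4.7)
The plan is to prove that $A^+$ is open; the argument for $A^-$ is symmetric with the roles of $(p,\beta)$ and $(q,\alpha)$ interchanged. Fix $\delta_0\in A^+$: by definition the quenching solution $(u_{\delta_0},v_{\delta_0})$ satisfies $u_{\delta_0}(x,t)\ge c_0>0$ on $\overline\Omega\times[0,T_{\delta_0})$ while $v_{\delta_0}$ quenches at $T_{\delta_0}$. The goal is to show that for every $\delta$ sufficiently close to $\delta_0$ the component $u_\delta$ remains uniformly bounded below, so that the solution $(u_\delta,v_\delta)$ (which quenches at time $T_\delta$ by Lemma \ref{condicionquenching}) must quench only through $v_\delta$, placing $\delta$ in $A^+$.

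I would split the time interval at a small parameter $\eta_0>0$, to be chosen later. On the compact slab $\overline\Omega\times[0,T_{\delta_0}-\eta_0]$ both $u_{\delta_0}$ and $v_{\delta_0}$ are continuous and strictly positive, hence bounded away from zero; the reaction in \eqref{1.1} is therefore Lipschitz there, and standard continuous dependence on initial data together with the continuity of $\delta\mapsto T_\delta$ proved in the previous lemma yields a neighbourhood $V_{\delta_0}$ of $\delta_0$ and an $\eta>0$ such that, for every $\delta\in V_{\delta_0}$, one has $|T_\delta-T_{\delta_0}|<\eta$ and $u_\delta(x,T_{\delta_0}-\eta_0)\ge c_0/2$ uniformly in $x$. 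If $T_\delta\le T_{\delta_0}-\eta_0$ the conclusion on the whole interval $[0,T_\delta)$ is immediate, so I may assume $T_\delta>T_{\delta_0}-\eta_0$.

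The decisive step is the control of $u_\delta$ on the terminal slab $[T_{\delta_0}-\eta_0,T_\delta)$, which touches the quenching time. From the first equation of \eqref{1.1}, multiplying by $(1+\alpha)u_\delta^\alpha$,
$$
(u_\delta^{1+\alpha})_t(x,t)\ge -(1+\alpha)u_\delta^{1+\alpha}(x,t)-(1+\alpha)\lambda v_\delta^{-p}(x,t).
$$
I substitute the a priori lower bound $v_\delta(x,t)\ge(T_\delta-t)^{1/(1+\beta)}$ from \eqref{eq.tasainf}, the upper bound $u_\delta\le M$ from Corollary \ref{lema.MNsuper}, and integrate from $T_{\delta_0}-\eta_0$ to $t\in[T_{\delta_0}-\eta_0,T_\delta)$. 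The singular contribution reduces to
$$
\int_{T_{\delta_0}-\eta_0}^{T_\delta}(T_\delta-s)^{-p/(1+\beta)}\,ds \,\le\, \frac{(\eta+\eta_0)^{1-p/(1+\beta)}}{1-p/(1+\beta)},
$$
which is finite precisely because $p-\beta<1$. This integrability is the main obstacle and the essential point where the standing hypothesis of the subsection enters.

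Choosing $\eta_0$ small first, then shrinking $V_{\delta_0}$ so that $\eta$ is also small, both the linear term $C M^{1+\alpha}(\eta+\eta_0)$ and the singular term above become arbitrarily small compared with $(c_0/2)^{1+\alpha}$. This yields $u_\delta(x,t)\ge c_0/4>0$ on $\overline\Omega\times[0,T_\delta)$, so that $u_\delta$ does not quench and consequently $v_\delta$ must. Hence $\delta\in A^+$, proving that $A^+$ is open. The openness of $A^-$ follows from the symmetric argument, using the hypothesis $q-\alpha<1$ to ensure that the analogous integral $\int(T_\delta-s)^{-q/(1+\alpha)}\,ds$ is finite.
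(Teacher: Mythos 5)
Your proposal is correct and follows essentially the same route as the paper: continuity of $\delta\mapsto T_\delta$ plus continuous dependence on initial data give a uniform lower bound $u_\delta\ge c_0/2$ at an intermediate time close to $T_{\delta_0}$, and then integrating the inequality for $(u_\delta^{1+\alpha})_t$ with the bound $v_\delta\ge(T_\delta-t)^{1/(1+\beta)}$ over the short terminal interval — where the exponent condition $p-\beta<1$ makes the singular integral small — keeps $u_\delta$ away from zero. The only cosmetic difference is that the paper carries the linear term via the integrating factor $e^{(1+\alpha)t}$ while you absorb it as a small additive error, which changes nothing of substance.
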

\begin{proof}
We only study $A^+$ and the study of $A^-$ is similar. Notice that for $\delta_0 \in A^+$  there exists $c>0$ such that $u_{\delta_0}(x,t)\ge c$ for every $(x,t) \in \overline\Omega \times [0,T_{\delta_0})$.

Now for a fixed $\varepsilon>0$, we consider $\hat{t} \in (T_{\delta_0} - \varepsilon/2,T_{\delta_0}-\varepsilon/4)$.
By the continuity of the quenching time with respect to $\delta$ there exists some $\mu_1>0$ such that $|T_\delta - T_{\delta_0}| < \varepsilon/4$ provided $|\delta-\delta_0| < \mu_1$. Notice that $\hat t<T_\delta$, in fact
$$
T_\delta-\hat t\le|T_\delta-T_{\delta_0}|+|T_{\delta_0}-\hat t|\le \varepsilon.
$$
Since $\hat t<T_\delta$, the function $u_\delta(x,\hat t)$ is well defined. Then due to
the continuous dependence with respect to the initial data, there exists $\mu_1 \geq \mu_2>0$ such that $u_\delta(x,\hat{t}) \geq c/2$ for every $x\in\overline\Omega$ if $|\delta-\delta_0| < \mu_2$.

Then consider $\delta \in (\delta_0 - \mu_2, \delta_0 + \mu_2)$. Following the same argument as in Lemma \ref{lema.sets.nonempty}, integrating between $\hat{t}$ and $T_\delta$ this time, we get
\begin{align*}
\displaystyle \lim_{t\nearrow {T}_\delta} e^{(1+\alpha)t} u^{1+\alpha}_\delta(x,t)
&\displaystyle
\ge e^{(1+\alpha)\hat{t}} u^{1+\alpha}_\delta(x,\hat{t})- \frac{\lambda(1+\alpha)(1+\beta)}{1+\beta-p} e^{(1+\alpha)T_\delta}( T_\delta-\hat{t})^{\frac{1+\beta-p}{1+\beta}} \\
& \ge e^{(1+\alpha)T_\delta}\left(\left(\frac{ce^{-\varepsilon}}{2}\right)^{1+\alpha} - \frac{\lambda(1+\alpha)(1+\beta)}{1+\beta-p} \varepsilon^{\frac{1+\beta-p}{1+\beta}}\right).
\end{align*}

Then taking $\varepsilon$ small enough we deduce that $u_\delta$ does not quench, that is, $(\delta_0-\mu_2,\delta_0+\mu_2)\subset A^+$ and the result follows.
\end{proof}

Since $A^+$ and $A^-$ are nonempty open sets and $(0,1)$ is a connected interval, it follows that $A$ is a closed nonempty subset of $(0,1)$. Therefore, there exists some initial datum taking $\delta \in A$ such that the solution presents simultaneous quenching, and the solution will present non-simultaneous quenching if we consider initial data taking $\delta\in A^+ \cup A^-$.

\begin{proof}[Proof of Theorem \ref{teo.simultaneo}]
    This follows from Lemma \ref{max(p,q)>=1}, Lemma \ref{lema.sets.nonempty} and Lemma \ref{lema.sets.open}.
\end{proof}

\end{section}

\begin{section}{Quenching Rates}
Our final main objective in this paper is to obtain the quenching rates of the solutions of \eqref{1.1}. To study these quenching rates, we will need to work with the quantities
$$
\min_{x \in \overline\Omega} u(x,t) = u(x_u(t),t), \qquad \min_{x \in \overline\Omega} v(x,t) = v(x_v(t),t).
$$
These functions of $t$ are differentiable for almost every time and it can be shown that $\partial_t (u(x_u(t),t)) = u_t (x_u(t),t)$ and $\partial_t (v(x_v(t),t)) = v_t (x_v(t),t)$. Note that this allows us to integrate the functions $u_t(x_u(t),t)$ and $v_t(x_v(t),t)$ without issue.

We start with the nonsimultaneous case.  The result is stated in Theorem \ref{teo.tasas.mosimultena}.
\begin{proof}[Proof of Theorem \ref{teo.tasas.mosimultena}]
    Consider $(u,v)$ a quenching solution of \eqref{1.1} defined up to time $T$. Assume that $u$ is the quenching component (the other case is similar), that is,
$$
\lim_{t\to T} u(x_u(t),t)=0, \qquad
v(x,t)\ge \delta \text{   for } (x,t) \in \overline\Omega \times [0,T).
$$
Therefore, at point $(x_u(t),t)$ we have that the diffusion is bounded as always and the absorption is unbounded, then
\begin{equation} \label{ut.bounds}
u_t (x_u(t),t) =J*u (x_u(t),t) -u (x_u(t),t)-\lambda u^{-\alpha}v^{-p}(x_u(t),t) \sim - u^{-\alpha}(x_u(t),t)
\end{equation}
and the quenching rate is obtained by integration. The fact that $q-\alpha<1$ follows from Corollary \ref{lema.auxiliar.nosim}.
\end{proof}

For the simultaneous case, stated in Theorem \ref{lema.qrate.sim}, we will need some relation between $u$ and $v$. 
As shown earlier, Proposition \ref{prop.des.clave} and Corollary \ref{lema-estimaciones} gives us this relation in the case $p-\beta,q-\alpha\ge1$. However, as said in Remark \ref{proposicion vacia}, in the case $p-\beta,q-\alpha<1$, Proposition  \ref{prop.des.clave} is empty and we need to impose the extra hypothesis $x_u(t)=x_v(t)$.

\begin{proof}[Proof of Theorem \ref{lema.qrate.sim}]
Let $(u,v)$ be a simultaneous quenching solution, then 
$$
\liminf_{t\to T} u(x_u(t),t)=0, \qquad
\liminf_{t\to T} v(x_v(t),t)=0.
$$
In fact, thanks to \eqref{eq-m} we have 
\begin{equation} \label{todoacero}
        \lim_{t\nearrow T} u(x_u(t),t) =  \lim_{t\nearrow T} v(x_v(t),t) = 0.
\end{equation}
Indeed, suppose this is not true and  $\limsup_{t\to T} u(x_u(t),t)=c>0$. Since, for $t$ close to $T$, the function $u$ oscillates between $(0,c)$, there exists a sequence of times $t_n\to T$ such that
    $$
    u(x_u(t_n),t_n)= c/2,\qquad
    u_t(x_u(t_n),t_n)\ge0.
    $$
    
In the case $p-\beta, q-\alpha \geq 1$, we note that, since $u(x_v(t_n),t_n)\ge u(x_u(t_n),t_n) =  c/2$ for every $n\in\mathbb{N}$, Lemma \ref{max(p,q)>=1} gives us that $v(x_v(t_n),t_n) \geq  \widetilde c>0$. Therefore,
$$
\min \left\{ \min_{x\in\overline\Omega} u(x,t_n), \min_{x\in\overline\Omega} v(x,t_n)\right\} \ge \min\left\{\frac{c}{2},\widetilde c\right\}>0,
$$
for every $n\in\mathbb{N}$, which is a contradiction with \eqref{eq-m}.

On the other hand, if $p-\beta,q-\alpha< 1$, we assume that $x_u(t)=x_v(t)$. Then from \eqref{eq-m} we have that $v(x_u(t_n),t_n)\to 0$ which implies $u_t(x_u(t_n),t_n)\to-\infty$, and we get another contradiction.

Now observe that the diffusion is always bounded while the absorption terms in this case are unbounded. Therefore, there exists a time $t_0 \in [0,T)$ such that
\begin{equation} \label{equiv.ut}
    u_t(x_u(t),t) \sim - v^{-p} u^{-\alpha} (x_u(t),t), \qquad
    v_t(x_v(t),t) \sim - u^{-q} v^{-\beta} (x_v(t),t)
\end{equation}
for every $t\in[t_0,T)$. Which is a crucial estimate to obtain the quenching rates. We consider several cases in terms of the parameters.

 $i)$ Assume that $p-\beta,q-\alpha > 1$. From Corollary \ref{lema-estimaciones} we know that there exists $t_1\in [t_0,T)$ such that
\begin{equation}\label{equiv.uv1}
    u^{1-q+\alpha} (x_u (t),t) \sim v^{1-p+\beta} (x_u (t),t),\qquad
    u^{1-q+\alpha} (x_v (t),t) \sim v^{1-p+\beta} (x_v (t),t)
\end{equation}
for every $t\in[t_1,T)$. Using the first equivalence in \eqref{equiv.uv1} along with the first one in \eqref{equiv.ut} we get
\begin{equation*}
    u_t(x_u(t),t) \sim - u^{\frac{pq-p-\alpha-\alpha\beta}{1-p+\beta}}(x_u(t),t).
\end{equation*}
Then we can integrate this equivalence between $t\in[t_1,T)$ and $T$ and arrive to the desired quenching rate
\begin{equation*}
    u(x_u(t),t) \sim (T-t)^{\frac{p-1-\beta}{pq-(1+\alpha)(1+\beta)}}.
\end{equation*}
The quenching rate  for the component $v$ is given by the second equivalence in \eqref{equiv.uv1}.
\begin{equation*}
    v(x_u(t),t) \sim (T-t)^{\frac{q-1-\alpha}{pq-(1+\alpha)(1+\beta)}}.
\end{equation*}

To consider the behaviour of the solution along the sequence $(x_v(t),t)$ we observe that, on one hand,
$$
v(x_v(t),t)\le v(x_u(t),t)\le C (T-t)^{\frac{q-1-\alpha}{pq-(1+\alpha)(1+\beta)}}.
$$
And on the other hand,
$$
C (T-t)^{\frac{p-1-\beta}{pq-(1+\alpha)(1+\beta)}}\le u(x_u(t),t) \le u(x_v(t),t)\le C v^{\frac{p-1-\beta}{q-1-\alpha}}(x_v(t),t).
$$
As before, the quenching rate  for the component $u$ is given by \eqref{equiv.uv1}.

$ii)$ Assume now that $p-\beta> 1= q-\alpha$ and consider the behaviour of the solution along the sequence $(x_u(t),t)$ (the quenching rate along $(x_u(t),t)$ gives us the quenching rate along $(x_v(t),t)$ with the same reasoning as in case $i)$).

In this case, Corollary \ref{lema-estimaciones} gives us that there exists $t_2\in [t_0,T)$ such that
\begin{equation}\label{equiv.uv2}
    v^{1-p+\beta} (x_u(t),t) \sim -\log u(x_u(t),t) = \log \left(\frac{1}{u(x_u(t),t)} \right)
\end{equation}
for every $t\in[t_2,T)$.  Then by \eqref{equiv.ut},  
\begin{equation}\label{eq.log}
  u_t(x_u(t),t) \sim - u^{-\alpha}(x_u(t),t)\left(\log \left( \frac1{u(x_u(t),t)} \right)\right)^{\frac{p}{p-1-\beta}}.
\end{equation}
Integrating this expression between $t\in[t_2,T)$ and $T$, the rate is given implicitly by
\begin{equation} \label{integral.rate}
\int_{u(x_u(t),t)}^0 s^{\alpha}\left( \log \left( \frac1{s} \right) \right)^{\frac{p}{1-p+\beta}}\,ds\sim -(T-t).
\end{equation}
We can then apply the L'H\^opital rule over the following limit to get
\begin{equation}\label{limitelog}
    \lim_{t\nearrow T} \frac{\int^{u(x_u(t),t)}_{0} s^{\alpha} \log (1/s)^{\frac{p}{1-p+\beta}} ds}{ u^{1+\alpha}(x_u(t),t) \log (1/u(x_u(t),t))^{\frac{p}{1-p+\beta}}}=\frac{1}{1+\alpha}.
\end{equation}
Therefore, using both \eqref{integral.rate} and \eqref{limitelog}, there exists $t_3\in[t_2,T)$ such that
$$
\log \left(\frac{1}{u(x_u(t),t)}\right)^{\frac{p}{p-1-\beta}}\sim \frac{u^{1+\alpha}(x_u(t),t)}{(T-t)}
$$
for every $t\in[t_3,T)$. Plugging these estimates in \eqref{eq.log}
we can integrate between $t_3$ and $t \in [t_3,T)$ to obtain
\begin{equation} \label{equiv.logu.logT}
    \log u(x_u(t),t) \sim \log(T-t),
\end{equation}
for every $t\in[t_3,T)$. Using again this estimate in \eqref{eq.log} we get
$$
u^{1+\alpha}(x_u(t),t)\sim \int_t^T (-\log(T-s))^{\frac{-p}{1-p-\beta}} ds
= \int_{-\log(T-t)}^\infty \tau^{\frac{-p}{1-p-\beta}} e^{-\tau} d\tau.
$$
That is, $u^{1+\alpha}(x_u(t),t)$ behaves like the upper incomplete Gamma function with the following parameters:
\begin{equation*}
    u^{1+\alpha}(x_u(t),t) \sim \Gamma\left( \frac{1-2p-\beta}{1-p-\beta}, -\log (T-t) \right).
\end{equation*}
Using the asymptotic properties of the upper incomplete Gamma function, we know that:
\begin{equation*}
    \frac{\Gamma\left( \frac{1-2p-\beta}{1-p-\beta}, -\log (T-t) \right)}{(-\log (T-t))^{-p/(1-p-\beta)}(T-t)} \xrightarrow{t\rightarrow T} 1,
\end{equation*}
and the quenching rate for $u$ follows. The quenching rate for the $v$ component follows immediately from
\eqref{equiv.logu.logT} and  \eqref{equiv.uv2}.

$iii)$ The proof for this case is analogous to that of point $ii)$.

$iv)$ The proof for this case is analogous to that of point $i)$.

$v)$ In this case both components reach the minimum at the same point, that is $x(t):=x_u(t)=x_v(t)$. Then,
    $$
    \lim_{t\nearrow T} \frac{v^p u^\alpha u_t}{u^q v^\beta v_t}(x(t),t) =
    \lim_{t\nearrow T} \frac{v^p u^\alpha (J*u-u)-\lambda}{u^q v^\beta (J*v-v)-\mu}(x(t),t) =
    \frac{\lambda}{\mu}.
    $$
    This implies that there exists $t_2\in[t_0,T)$ such that
    $$
    u^{\alpha-q} u_t(x(t),t)\sim v^{\beta-p} v_t(x(t),t).
    $$
    Integrating between $t\in[t_2,T)$ and $T$ we get the estimate
    $$
    u^{1+\alpha-q}(x(t),t)\sim v^{1+\beta-p}(x(t),t).
    $$
    At this point we can follow the same argument as in case $i)$ to obtain the quenching rates.
\end{proof}

\end{section}

\centerline{{\bf Acknowledgements}}

S. Junquera is  partially supported by grant  PID2022-137074NBI00 funded by \linebreak MCIU/AEI/10.13039/501100011033, Spain and by the FPU21/05580 grant from the Ministry of Science, Innovation and Universities.

\end{document}